\newcommand{\sumprime}{\if@display\sideset{}{'}\sum%
            \else\sum'\fi}
\begin{document}

\numberwithin{equation}{section}

% define theorem environments
\newtheorem{theorem}{Theorem}[section]
\newtheorem{proposition}[theorem]{Proposition}
\newtheorem{conjecture}[theorem]{Conjecture}
\def\theconjecture{\unskip}
\newtheorem{corollary}[theorem]{Corollary}
\newtheorem{lemma}[theorem]{Lemma}
\newtheorem{observation}[theorem]{Observation}
\newtheorem{definition}{Definition}
\numberwithin{definition}{section} %\def\thedefinition{\unskip}
\newtheorem{remark}{Remark}
\def\theremark{\unskip}
\newtheorem{question}{Question}
\def\thequestion{\unskip}
\newtheorem{example}{Example}
\def\theexample{\unskip}
\newtheorem{problem}{Problem}

\def\vvv{\ensuremath{\mid\!\mid\!\mid}}
\def\intprod{\mathbin{\lr54}}
\def\reals{{\mathbb R}}
\def\integers{{\mathbb Z}}
\def\N{{\mathbb N}}
\def\complex{{\mathbb C}\/}
\def\dist{\operatorname{dist}\,}
\def\spec{\operatorname{spec}\,}
\def\interior{\operatorname{int}\,}
\def\trace{\operatorname{tr}\,}
\def\cl{\operatorname{cl}\,}
\def\essspec{\operatorname{esspec}\,}
\def\range{\operatorname{\mathcal R}\,}
\def\kernel{\operatorname{\mathcal N}\,}
\def\dom{\operatorname{Dom}\,}
\def\linearspan{\operatorname{span}\,}
\def\lip{\operatorname{Lip}\,}
\def\sgn{\operatorname{sgn}\,}
\def\Z{ {\mathbb Z} }
\def\e{\varepsilon}
\def\p{\partial}
\def\rp{{ ^{-1} }}
\def\Re{\operatorname{Re\,} }
\def\Im{\operatorname{Im\,} }
\def\dbarb{\bar\partial_b}
\def\eps{\varepsilon}
\def\O{\Omega}
\def\Lip{\operatorname{Lip\,}}

\def\Hs{{\mathcal H}}
\def\E{{\mathcal E}}
\def\scriptu{{\mathcal U}}
\def\scriptr{{\mathcal R}}
\def\scripta{{\mathcal A}}
\def\scriptc{{\mathcal C}}
\def\scriptd{{\mathcal D}}
\def\scripti{{\mathcal I}}
\def\scriptk{{\mathcal K}}
\def\scripth{{\mathcal H}}
\def\scriptm{{\mathcal M}}
\def\scriptn{{\mathcal N}}
\def\scripte{{\mathcal E}}
\def\scriptt{{\mathcal T}}
\def\scriptr{{\mathcal R}}
\def\scripts{{\mathcal S}}
\def\scriptb{{\mathcal B}}
\def\scriptf{{\mathcal F}}
\def\scriptg{{\mathcal G}}
\def\scriptl{{\mathcal L}}
\def\scripto{{\mathfrak o}}
\def\scriptv{{\mathcal V}}
\def\frakg{{\mathfrak g}}
\def\frakG{{\mathfrak G}}
\def\texthtbardotlessj
\def\ov{\overline}

\thanks{Supported by NSF grant 11771089 and Gaofeng grant support from School of Mathematical Sciences, Fudan University}

\address{School of Mathematical Sciences, Fudan University, Shanghai, 200433, China}
 \email{boychen@fudan.edu.cn}
 
\title{Weighted Bergman kernel, directional Lelong number and John-Nirenberg exponent}
\author{Bo-Yong Chen}
\date{}
\maketitle
\begin{abstract}
 Let $\psi$ be a plurisubharmonic function on the closed unit ball and $K_{t\psi}(z)$ the Bergman kernel on the unit ball with respect to the weight $t\psi$.  We show that the boundary behavior of $K_{t\psi}(z)$ is determined by certain directional Lelong number of $\psi$ for all $t$ smaller than the John-Nirenberg exponent of $\psi$ associated to certain family of nonisotropic balls, which is always positive.
\end{abstract}

\bigskip

\noindent{{\sc Keywords}}: Weighted Bergman kernel, plurisubharmonic function, directional Lelong number, John-Nirenberg exponent.
\section{Introduction}

Let $B_1$ be the unit ball in ${\mathbb C}^n$ and $PSH(B_1)$ the set of plurisubharmonic (psh) functions on $B_1$ (we always assume that psh functions are not identically $-\infty$). For each  $\psi\in PSH(B_1)$ we define $K_{t\psi}(z,w)$ to be the weighted Bergman kernel of the Hilbert space
  $$
  A^2_{t\psi}=\left\{f\in {\mathcal O}(B_1): \int_{B_1} |f|^2 e^{-t\psi}<\infty\right\},\ \ \ t\ge 0.
  $$
  Set $K_{t\psi}(z)=K_{t\psi}(z,z)$. A cerebrated theorem of Demailly \cite{Demailly92} states that
  $$
  \psi_t:=\frac1t \log K_{t\psi}(z)\rightarrow \psi(z)\ \ \ (t\rightarrow +\infty)
  $$
  and
  $$
  \nu(\psi,z)-2n/t\le \nu(\psi_t,z)\le \nu(\psi,z)
  $$
  where $\nu(\varphi,z)$ denotes the Lelong number for a psh function $\varphi$ at $z$.

  In this paper we consider the case when $t$ is fixed and $z$ approaches the boundary $\partial B_1$. We discover that for all sufficiently small $t$ the asymptotic behavior of $K_{t\psi}(z)$ at a boundary point $\zeta$ is determined by certain\/ {\it directional\/} Lelong number of $\psi$ at $\zeta$. To state the results precisely, we need to introduce some notions. Let
   $$
   \tilde{B}_r=\left\{(z_1,z')\in {\mathbb C}\times {\mathbb C}^{n-1}: |z_1|<r,|z'|<\sqrt{r}\right\}.
   $$
     For a bounded domain $\Omega\subset {\mathbb C}^n$ we define  $\tilde{\mathcal B}(\Omega)$ to be the set of all $F(\tilde{B}_r)\subset \Omega$ where $F$ is a complex affine mapping composed by a translation and a unitary transformation. We define the John-Nirenberg exponent of $\psi$ associated to the family  $\tilde{\mathcal B}(\Omega)$ by
       $$
  \tilde{ \varepsilon}_\Omega(\psi):=\sup\left\{\varepsilon:\sup_{D\in \tilde{\mathcal B}(\Omega)} \fint_D e^{\varepsilon |\psi-\psi_D|}<\infty\right\}
  $$
  where $\psi_D=\fint_D \psi$ is the mean value of $\psi$ over $D$.
   For each $\zeta\in \partial B_1$ we denote by ${\mathcal T}_\zeta$ the holomorphic tangent space at $\zeta$ and ${\mathcal N}_\zeta$ the orthogonal complement of ${\mathcal T}_\zeta$ in ${\mathbb C}^n$. Let $F_\zeta$ be the complex affine mapping which is composed by a translation and a unitary transformation, and maps the $z_1$ axis to ${\mathcal N}_\zeta$ and $z_1=0$ to ${\mathcal T}_\zeta$ respectively.

     \begin{theorem}\label{th:Bergman_Lelong}
  Let $\psi$ be a psh function in a neighborhood of the closed ball $\overline{B}_R:=\{|z|\le R\}$ where $R>1$. For each $0\le t<\tilde{\varepsilon}_{B_R}(\psi)$ and each $\zeta\in \partial B_1$ we have
  \begin{equation}\label{eq:BergmanVsLelong}
   \lim_{r\rightarrow 0} \frac{\log K_{t\psi}((1-r)\zeta)}{\log 1/r}=n+1-t\tilde{\nu}(\psi_\zeta)
  \end{equation}
  where $\psi_\zeta=\psi\circ F_\zeta$ and
  $$
 \tilde{\nu}(\psi_\zeta)=\lim_{r\rightarrow 0} \frac{1}{\log r}\sup_{\theta_1,\cdots,\theta_n}\psi_\zeta(re^{i\theta_1},\sqrt{r} e^{i\theta_2},\cdots,\sqrt{r} e^{i\theta_n}).
  $$
     \end{theorem}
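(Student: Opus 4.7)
The plan is to bound $\log K_{t\psi}((1-r)\zeta)/\log(1/r)$ above and below as $r\to 0^+$ by the target value $n+1-t\tilde\nu(\psi_\zeta)$. Set $z_0=(1-r)\zeta$ and $\psi_\zeta=\psi\circ F_\zeta$, and work in the coordinates $w=(w_1,w')=F_\zeta^{-1}(z)$, in which $z_0$ corresponds to $(-r,0)$.

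\smallskip
\noindent\textit{Upper bound (no John--Nirenberg required).} For a small $c>0$ let $D_r=F_\zeta(\{|w_1+r|<cr,\ |w_j|<\sqrt{cr}\text{ for }j\ge 2\})\subset B_1$ be the nonisotropic polydisk of volume $\asymp r^{n+1}$ centered at $z_0$. For $f\in A^2_{t\psi}(B_1)$, the sub-mean value inequality for the psh function $\log|f|^2$ gives $\log|f(z_0)|^2\le\fint_{D_r}\log|f|^2$. Subtracting $t\psi_{D_r}$ and applying Jensen's inequality to $\log(|f|^2 e^{-t\psi})$ yields
\[
|f(z_0)|^2 \le \frac{e^{t\psi_{D_r}}}{|D_r|}\int_{D_r}|f|^2 e^{-t\psi},
\]
so $K_{t\psi}(z_0)\le C r^{-(n+1)} e^{t\psi_{D_r}}$. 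The maximum principle for psh functions on the nonisotropic polydisk (whose Shilov boundary is exactly the torus appearing in the definition of $\tilde\nu$) together with a linear rescaling gives
\[
\psi_{D_r}\le \sup_{D_r}\psi \le M(C'r),\qquad M(\rho):=\sup_{\theta_1,\ldots,\theta_n}\psi_\zeta(\rho e^{i\theta_1},\sqrt\rho e^{i\theta_2},\ldots,\sqrt\rho e^{i\theta_n}).
\]
The definition of $\tilde\nu(\psi_\zeta)$ forces $M(C'r)\le(\tilde\nu(\psi_\zeta)+o(1))\log r$, hence $\limsup_{r\to 0}\log K_{t\psi}(z_0)/\log(1/r)\le n+1-t\tilde\nu(\psi_\zeta)$.

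\smallskip
\noindent\textit{Lower bound.} This is where $t<\tilde\varepsilon_{B_R}(\psi)$ enters. Fix $k<(n+1-t\tilde\nu(\psi_\zeta))/2$ (which we may take positive, since $t<\tilde\varepsilon_{B_R}(\psi)$ forces $t\tilde\nu(\psi_\zeta)<n+1$) and consider the test function $f_k(z)=(1-\langle z,\zeta\rangle)^{-k}$, holomorphic on $B_1$ and satisfying $|f_k(z_0)|^2=r^{-2k}$. I bound $\|f_k\|_{t\psi}^2$ by decomposing $B_1$ into the dyadic nonisotropic shells
\[
S_j=\{z\in B_1:2^{-j-1}<|1-\langle z,\zeta\rangle|\le 2^{-j}\},\qquad j\ge 0.
\]
The constraint $z\in B_1$, which in $w$-coordinates reads $|1+w_1|^2+|w'|^2<1$, gives $|S_j|\asymp 2^{-j(n+1)}$ and $|f_k|^2\asymp 2^{2jk}$ on $S_j$. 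Each $S_j$ sits inside a nonisotropic polydisk $B_j\in\tilde{\mathcal B}(B_R)$ of type $\tilde B_{C 2^{-j}}$ placed near $\zeta$. Since $t<\tilde\varepsilon_{B_R}(\psi)$, the John--Nirenberg estimate gives
\[
\int_{S_j}e^{-t\psi}\le\int_{B_j}e^{-t\psi}\lesssim |B_j|\,e^{-t\psi_{B_j}},
\]
with $\psi_{B_j}=\fint_{B_j}\psi$. Combined with the asymptotic identity $\psi_{B_j}/\log 2^{-j}\to\tilde\nu(\psi_\zeta)$ --- the directional analogue of the classical fact that the Lelong number equals the limit of averages over shrinking balls --- this yields
\[
\int_{S_j}|f_k|^2 e^{-t\psi}\lesssim 2^{j(2k-(n+1)+t\tilde\nu(\psi_\zeta))+o(j)},
\]
which is summable in $j\ge 0$ precisely because $2k<n+1-t\tilde\nu(\psi_\zeta)$. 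Hence $\|f_k\|_{t\psi}^2<\infty$ and $K_{t\psi}(z_0)\ge |f_k(z_0)|^2/\|f_k\|_{t\psi}^2\gtrsim r^{-2k}$; sending $2k\uparrow n+1-t\tilde\nu(\psi_\zeta)$ gives $\liminf_{r\to 0}\log K_{t\psi}(z_0)/\log(1/r)\ge n+1-t\tilde\nu(\psi_\zeta)$.

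\smallskip
\noindent\textit{Main obstacle.} The hard step is the directional average-vs-supremum identity $\psi_{B_j}/\log 2^{-j}\to\tilde\nu(\psi_\zeta)$. The upper bound $\psi_{B_j}\le M(C 2^{-j})$ is immediate from the maximum principle, but the matching lower bound is the anisotropic counterpart of the standard equivalence between the supremum and the average formulations of the Lelong number. I expect it to follow from the convexity of $M(\rho)$ in $\log\rho$ in the form adapted to the $(1,\tfrac12)$-dilation structure of $\tilde B_\rho$, together with sub-mean value integration over the distinguished torus; the role of the John--Nirenberg hypothesis is precisely to convert such an averaged Lelong asymptotic into a pointwise weighted integral estimate that is not destroyed by deep negative spikes of $\psi$.
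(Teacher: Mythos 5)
Your approach is genuinely different from the paper's in both halves, so let me compare and then point at the gap.

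\textbf{Upper bound.} Your argument --- applying the sub-mean-value property to $\log|f|^2$, subtracting $t\psi_{D_r}$, and then using Jensen on $\log(|f|^2e^{-t\psi})$ --- gives $|f(z_0)|^2\le e^{t\psi_{D_r}}\fint_{D_r}|f|^2e^{-t\psi}$ directly, with \emph{no} appeal to the John--Nirenberg hypothesis. This is cleaner than the paper, which uses Cauchy--Schwarz and then the reverse Jensen inequality $\fint_D e^{t\psi}\le Ce^{t\psi_D}$ (Lemma~\ref{lm:ReverseJensen}), and that reverse Jensen is where $t<\tilde\varepsilon_{B_R}(\psi)$ enters in the paper's upper bound. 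You then feed $\psi_{D_r}\le M(C'r)$ into the upper estimate, which is fine by the maximum principle on the slightly larger nonisotropic polydisk.

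\textbf{Lower bound.} Here you use the explicit test function $f_k(z)=(1-\langle z,\zeta\rangle)^{-k}$ with a dyadic decomposition of $B_1$ into the nonisotropic shells $S_j$, together with the reverse Jensen estimate on each shell. The paper instead constructs a test function by solving a $\bar\partial$-equation with the Donnelly--Fefferman weight $-\log(-g_r)$, where $g_r$ is the pluricomplex Green function, and then applies reverse Jensen once on the set $D_r'\supset\{|T_{\zeta_r}|<1/\sqrt2\}$. Your construction is conceptually simpler and avoids $L^2$-methods for $\bar\partial$; this is a genuine alternative. The volume count $|S_j|\asymp 2^{-j(n+1)}$, the size $|f_k|^2\asymp 2^{2jk}$ on $S_j$, and the inclusion $S_j\subset B_j$ with $B_j$ a nonisotropic ball of scale $2^{-j}$ centered at $\zeta$ are all correct.

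\textbf{The gap.} You yourself flag it: you need $\limsup_j\psi_{B_j}/\log 2^{-j}\le\tilde\nu(\psi_\zeta)$, i.e., a lower bound on the nonisotropic-ball average $\psi_{B_j}$ matching the supremum $M(2^{-j})$, and you leave this as a conjecture to be deduced from ``convexity of $M(\rho)$ in $\log\rho$.'' Convexity of $M$ (which is the Hadamard three-circle property applied to $\tau\mapsto\psi_\zeta(e^\tau a_1,e^{\tau/2}a_2,\ldots)$) controls the supremum but does not by itself bound the polydisk average from below: the distinguished-torus averages over the many concentric tori inside $\tilde B_r$ are not comparable to $M$ without additional input. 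The paper closes exactly this gap in the proof of Theorem~\ref{th:Bergman_Lelong} with a different device: pick $a_r$ on the distinguished torus where the supremum $M(r)$ is attained, apply the sub-mean-value inequality to the polydisk $\tilde P_r+a_r$ centered at $a_r$ (so $(\psi_\zeta)_{\tilde P_r+a_r}\ge\psi_\zeta(a_r)=M(r)$), and then compare $(\psi_\zeta)_{\tilde B_r}$ with $(\psi_\zeta)_{\tilde P_r+a_r}$ via Lemma~\ref{lm:BMO_PSH_4}. That comparison costs a bounded constant precisely because the anisotropic John--Nirenberg property ($\tilde\varepsilon_{B_R}(\psi)>0$, Theorem~\ref{th:JohnNirenberg}) gives a uniform $L^1$-BMO bound on nonisotropic balls. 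So the JN hypothesis is doing double duty in the lower bound --- once for reverse Jensen on each $S_j$, and once (less visibly) to tie the polydisk average to $M(r)$. Without the shifted-polydisk step your argument is incomplete.

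A smaller issue: you assert that $t<\tilde\varepsilon_{B_R}(\psi)$ ``forces'' $t\tilde\nu(\psi_\zeta)<n+1$ without proof. You do not in fact need strict inequality: taking $k=0$, i.e.\ $f_0\equiv 1$, already gives $\liminf\ge 0$, and combined with your upper bound $\limsup\le n+1-t\tilde\nu(\psi_\zeta)$ this yields $t\tilde\nu(\psi_\zeta)\le n+1$ a posteriori; when equality holds the constant test function already attains the limit. So this can be patched, but as written the claim is unjustified.

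In summary: your route is attractive and both halves are workable, but the central ``average versus supremum'' identity for the anisotropic balls must be proved, and the paper's shifted-polydisk argument (mean value at the argsup point plus Lemma~\ref{lm:BMO_PSH_4}) is the missing step.
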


Note that the quantity $2^{1-n}\tilde{\nu}(\psi_\zeta)$ is essentially the directional Lelong number with coefficients $(1,2,\cdots,2)$ of $\psi_\zeta$ at\/ $0$ $($see \cite{DemaillyBook},\,p.\,166$)$.

Of course, Theorem \ref{th:Bergman_Lelong} is meaningless unless one has verified the following

\begin{theorem}\label{th:JohnNirenberg}
 Let $\Omega$ be a bounded domain in ${\mathbb C}^n$. If $\psi$ is psh in a neighborhood of $\overline{\Omega}$ then $ \tilde{ \varepsilon}_\Omega(\psi)>0$.
\end{theorem}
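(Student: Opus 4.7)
The plan is to derive the exponential integrability by adapting the classical John--Nirenberg strategy to the anisotropic family $\tilde{\mathcal{B}}(\Omega)$. The proof will have two phases: first, a uniform BMO-type bound for $\psi$ relative to $\tilde{\mathcal{B}}(\Omega)$; and second, a Calder\'on--Zygmund stopping-time iteration that converts the BMO bound into exponential decay of the level-set measure of $\psi - \psi_{D_0}$ inside a fixed reference ball $D_0$, which integrates to the desired exponential integrability for small $\varepsilon$.

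The first step is to establish
$$\sup_{D \in \tilde{\mathcal{B}}(\Omega)} \fint_D \bigl|\psi - \psi_D\bigr|\, dV < +\infty.$$
Two ingredients from plurisubharmonic function theory enter here. Since $\psi$ is psh on a neighborhood of the compact set $\overline{\Omega}$, upper semicontinuity yields a uniform upper bound $\psi \le M$, and the Lelong numbers of $\psi$ on $\overline{\Omega}$ are uniformly bounded by some $N$. Because each model ball $\tilde{B}_r$ is Reinhardt with respect to the origin, the sub-mean-value inequality applied to $D = F(\tilde{B}_r)$ gives $\psi_D \ge \psi(F(0))$. The global integrability $\int_\Omega e^{-\delta\psi}\, dV < +\infty$ for some $\delta > 0$, furnished by the Skoda--Kiselman theorem under the uniform Lelong bound, then provides quantitative control on how negative $\psi$ can be on large portions of any $D$. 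Combining these with the observation that the oscillation $\sup_D \psi - \psi_D$ is controlled by local (as opposed to global) data of $\psi$ produces the uniform $L^1$-oscillation bound.

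In the second phase, I set up the Calder\'on--Zygmund decomposition. The family $\tilde{\mathcal{B}}(\Omega)$ is doubling, with $|\tilde{B}_{2r}| = 2^{n+1}|\tilde{B}_r|$, and the closure of the definition under translations and unitary rotations provides a Vitali-type covering property. Given $D_0 \in \tilde{\mathcal{B}}(\Omega)$ and a threshold $\lambda$ a fixed multiple of the BMO norm from Step~1, one extracts maximal sub-balls $\{D_j\} \subset \tilde{\mathcal{B}}(\Omega)$ of $D_0$ on which the local mean of $\psi$ deviates from $\psi_{D_0}$ by more than $\lambda$; the BMO bound yields $\sum_j |D_j| \le (C/\lambda)|D_0|$, while $|\psi - \psi_{D_0}| \le C\lambda$ almost everywhere outside $\bigcup_j D_j$. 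Iterating the construction on each $D_j$ with $\psi_{D_j}$ as the new reference then produces
$$\bigl|\{z \in D_0 : |\psi(z) - \psi_{D_0}| > Ck\lambda\}\bigr| \le (c/\lambda)^k |D_0|,$$
and exponential integrability on $D_0$ follows by integrating this distributional bound.

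The main obstacle will be Step~1, specifically the uniform control of the oscillation $\sup_D \psi - \psi_D$ as $D$ shrinks toward a point where $\psi = -\infty$. The anisotropic shape $|z_1| < r$, $|z'| < \sqrt{r}$ is the parabolic scaling adapted to the Hessian structure of a plurisubharmonic function, and the Reinhardt symmetry of $\tilde{B}_r$ is precisely what makes the sub-mean-value inequality at $F(0)$ available; without these structural features (say, for an arbitrary ellipsoidal family), the uniform BMO estimate could well fail. A secondary technical point is to verify that the stopping-time selection in Step~2 produces sub-balls that remain in $\tilde{\mathcal{B}}(\Omega)$, and this is where the translation and unitary invariance of the definition becomes essential.
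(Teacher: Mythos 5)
Your Step~1 is where the argument breaks down, and it is precisely the part the paper spends two sections on. The ingredients you list do not deliver the uniform oscillation bound. The sub--mean-value inequality on the Reinhardt domain $\tilde B_r$ gives $\psi_D\ge\psi(F(0))$, but $\psi(F(0))$ can be $-\infty$ (or arbitrarily negative), so this furnishes no uniform lower bound on $\psi_D$. Skoda-type integrability $\int_\Omega e^{-\delta\psi}<\infty$ yields $|\{\psi<-t\}|\lesssim e^{-\delta t}$, and if you feed this into $\fint_D|\psi|$ you obtain only $|\psi_D|=O\bigl(\log(1/|D|)\bigr)$, which blows up as $D$ shrinks; combined with $\psi\le M$ this gives $\fint_D|\psi-\psi_D|=O(\log(1/|D|))$, not a uniform bound. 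The cancellation that makes the BMO norm finite while the means diverge is exactly the content that has to be proved, and ``oscillation is controlled by local data'' is an assertion, not an argument. In short, the proposal postpones the entire difficulty to a sentence.

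The paper's route is genuinely different and worth contrasting. Rather than aiming directly for an anisotropic BMO bound, it first proves a quantitative Euclidean BMO estimate for negative psh functions, $\|\psi\|_{{\rm BMO}(B_R)}\le C_{n,\alpha}(1+|\psi_{B_R}|)^\alpha$, via the Riesz decomposition and an integration-by-parts lemma that controls $\int\phi^2\Delta\psi$; this is then upgraded to a Bernstein-type sup-bound over subsets of Euclidean balls through the $A_2$ theory of weights. The parabolic geometry enters only at the end: the substitution $\varphi(\zeta)=\psi(\zeta_1^2,\zeta')$ pulls back the anisotropic ball $\tilde B_r$ to (essentially) a Euclidean polydisc $B^*_{\sqrt r}$, so the Euclidean Bernstein inequality yields a reverse H\"older inequality and a doubling property for the weight $e^{\psi/\lambda}$ over the $\tilde B_r+a$. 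From there, the exponential integrability is obtained not by a Calder\'on--Zygmund stopping-time iteration but by invoking Calder\'on's theorem on maximal functions relative to a quasi-metric, which converts reverse H\"older plus doubling into an $A_p$-type bound. Your Step~2 (a CZ iteration adapted to the quasi-metric $\varrho(z,w)=\max\{|z_1-w_1|,|z'-w'|^2\}$) is a legitimate alternative to Calder\'on's theorem once an anisotropic BMO estimate is in hand, but it cannot substitute for the missing Step~1. To salvage your plan you would still need something like the paper's Euclidean BMO estimate together with the $\zeta_1\mapsto\zeta_1^2$ transfer, at which point you would have reproduced the paper's hard work and only the final step would differ.
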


The proof of Theorem \ref{th:JohnNirenberg} relies on the following local Bernstein type inequality
 \begin{equation}\label{eq:Bernstein_0}
   \sup_B \psi\le \sup_E \psi+ C_{n,\alpha} \left( 1+|\psi_{B_R}|\right)^\alpha \left[1+\log \left({|B|}/{|E|}\right)\right]
   \end{equation}
 for each ball $B\subset B_R$, measurable set $E\subset B$ and negative psh function $\psi $ on $B_{2R}$, where $\alpha>1$ and $C_{n,\alpha}$ depends only on $n,\alpha$. Inequalities like (\ref{eq:Bernstein_0}) were obtained earlier by Brudnyi \cite{Brudnyi}. Analogous global inequalities were obtained by Benelkourchi et al. \cite{Zeriahi} for the Lelong class of psh functions.  The analysis in these papers relies heavily on (nonlinear) pluripotential theory. Here we shall present an entirely new approach, using only linear analysis: the Riesz decomposition theorem and some basic facts from the theory of weights   (see \cite{SteinHarmonicBook}).

       \section{Proof of Theorem \ref{th:Bergman_Lelong}}

    Theorem \ref{th:Bergman_Lelong} will be deduced from the following

        \begin{theorem}\label{th:BergmanEstimate}
 Let $\psi$ be a psh function in a neighborhood of $\overline{B}_R$ where $R>1$. For each $0\le t<\tilde{\varepsilon}_{B_R}(\psi)$ there exists a constant $C>0$ such that for all $\zeta\in \partial B_1$ and $0<r\ll 1$,
  \begin{equation}\label{eq:Bergman}
  C^{-1} r^{-n-1} e^{t\psi_{\tilde{B}_r(\zeta)}}\le K_{t\psi}((1-r)\zeta) \le C r^{-n-1} e^{t\psi_{\tilde{B}_r(\zeta)}}
   \end{equation}
   where $\tilde{B}_r(\zeta)=F_\zeta(\tilde{B}_r)$ and $F_\zeta$ is as in \S\,1.
     \end{theorem}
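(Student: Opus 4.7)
The plan is to establish the upper and lower bounds in~(\ref{eq:Bergman}) separately; both rest on the John-Nirenberg estimate
\[
\int_{D'}e^{\pm\alpha\psi}\,dV\le C|D'|\,e^{\pm\alpha\psi_{D'}},\qquad D'\in\tilde{\mathcal B}(B_R),\ 0\le\alpha<\tilde{\varepsilon}_{B_R}(\psi),
\]
and on its ``telescoping'' corollary: for a chain $B_0\subset B_1\subset\cdots\subset B_k$ of nonisotropic balls with $|B_{j+1}|/|B_j|$ bounded, $|\psi_{B_k}-\psi_{B_0}|\le C_0 k$ with $C_0=C_0(\tilde{\varepsilon})$ (apply Jensen's inequality to the exponentially integrable $\psi-\psi_{B_{j+1}}$ and iterate). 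Throughout, $z_0:=(1-r)\zeta$.

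For the upper bound, fix $f\in A^2_{t\psi}$ and choose $s\in(0,1)$ small enough that $st/(1-s)<\tilde{\varepsilon}_{B_R}(\psi)$, which is possible because $t<\tilde{\varepsilon}$. Let $\tilde{D}$ denote a nonisotropic ball of size $r$ centered at $z_0$; by telescoping, the means $\psi_{\tilde{D}}$ and $\psi_{\tilde{B}_r(\zeta)}$ differ only by a constant. Since $|f|^{2s}$ is plurisubharmonic, the sub-mean-value inequality on $\tilde{D}$ gives $|f(z_0)|^{2s}\le(C/|\tilde{D}|)\int_{\tilde{D}}|f|^{2s}$. Writing $|f|^{2s}=(|f|^2 e^{-t\psi})^{s}\cdot e^{st\psi}$ and applying H\"older with conjugate exponents $1/s$ and $1/(1-s)$ yields
\[
\fint_{\tilde{D}}|f|^{2s}\le\Bigl(\fint_{\tilde{D}}|f|^2 e^{-t\psi}\Bigr)^{s}\Bigl(\fint_{\tilde{D}}e^{st\psi/(1-s)}\Bigr)^{1-s};
\]
the second factor is $\le Ce^{st\psi_{\tilde{D}}}$ by the John-Nirenberg estimate. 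Raising to the $1/s$ power and substituting $\psi_{\tilde{B}_r(\zeta)}$ for $\psi_{\tilde{D}}$ gives $|f(z_0)|^2\le Cr^{-n-1}e^{t\psi_{\tilde{B}_r(\zeta)}}\|f\|_{A^2_{t\psi}}^2$, the desired upper bound.

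For the lower bound, test against the explicit holomorphic function
\[
f_r(z):=\Bigl(\frac{r}{1-\overline{z_0}\cdot z}\Bigr)^{N}
\]
(well-defined on $B_1$ since $|\overline{z_0}\cdot z|<1$ there), with $|f_r(z_0)|^2=(2-r)^{-2N}\asymp 1$. For a large integer $N$ to be chosen, it suffices to show $\|f_r\|_{A^2_{t\psi}}^2\le Cr^{n+1}e^{-t\psi_{\tilde{B}_r(\zeta)}}$. Decompose $B_1$ dyadically in the pseudodistance $\delta(z):=|1-\overline{z_0}\cdot z|$, setting $D_k:=\{2^kr\le\delta<2^{k+1}r\}\cap B_1$; a standard geometric computation shows $D_k\subset\tilde{B}_{c\cdot 2^{k+1}r}(\zeta)\in\tilde{\mathcal B}(B_R)$, and $|f_r|^2\le 2^{-2Nk}$ on $D_k$. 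Combining John-Nirenberg on $\tilde{B}_{c\cdot 2^{k+1}r}(\zeta)$ with telescoping yields
\[
\int_{D_k}|f_r|^2 e^{-t\psi}\le C\cdot 2^{-k(2N-n-1)}\cdot r^{n+1}\cdot e^{-t\psi_{\tilde{B}_r(\zeta)}}\cdot e^{tC_0 k}.
\]
The sum converges once $N$ is chosen with $2N-n-1>tC_0/\log 2$, possible for any fixed $t<\tilde{\varepsilon}$; the tail where $2^kr\gtrsim 1$ contributes only $O(r^{2N})$, negligible for large $N$.

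The main obstacle is the telescoping inequality $|\psi_{B_k}-\psi_{B_0}|\le C_0 k$, the plurisubharmonic analogue of the standard BMO telescoping. Its uniform derivation from $\tilde{\varepsilon}_{B_R}(\psi)>0$ requires care (Jensen inequality on each comparison $B_j\subset B_{j+1}$, absorbing the volume ratio into the constant), and without it the dyadic series in the lower bound cannot be summed uniformly for $t$ up to the full threshold $\tilde{\varepsilon}_{B_R}(\psi)$.
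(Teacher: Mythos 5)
Your upper-bound argument is in substance the same as the paper's: restrict to a nonisotropic ball centered at $(1-r)\zeta$ contained in $B_1$, apply the sub-mean-value inequality and H\"older, then invoke the reverse Jensen inequality coming from $\tilde{\varepsilon}_{B_R}(\psi)>0$ and the comparison of means over comparable sets (the paper's Lemma~\ref{lm:BMO_PSH_4}). The extra parameter $s$ is superfluous: Cauchy--Schwarz ($s=1/2$) already yields the factor $\bigl(\fint e^{t\psi}\bigr)^{1/2}$, and the hypothesis $t<\tilde{\varepsilon}_{B_R}(\psi)$ is exactly what is needed there, so there is no gain in letting $s\to 0$.

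Your lower-bound argument is genuinely different from the paper's and, as far as I can tell, correct. The paper produces a normalized extremal candidate by solving a $\bar\partial$-equation with a Donnelly--Fefferman weight built from the pluricomplex Green function, localizing the mass to the Kor\'anyi-type set $D_r'$ in a single stroke; the estimate then reduces to $\fint_{D_r'}e^{-t\psi}\le Ce^{-t\psi_{D_r'}}$. You instead test against the explicit holomorphic function $f_r(z)=\bigl(r/(1-\langle z,z_0\rangle)\bigr)^{N}$ and control $\|f_r\|^2_{t\psi}$ by slicing $B_1$ into dyadic nonisotropic shells $D_k$ where $|1-\langle z,z_0\rangle|\asymp 2^kr$; on each shell you bound $\int e^{-t\psi}$ by the reverse Jensen inequality on the enveloping nonisotropic ball, then transfer the mean back to $\tilde B_r(\zeta)$ by the telescoping estimate. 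The telescoping you flag as the main obstacle is in fact unproblematic given what the paper already establishes: for $t<\tilde\varepsilon_{B_R}(\psi)$ one has $\sup_D\fint_D|\psi-\psi_D|<\infty$ (Jensen from the exponential bound), and then Lemma~\ref{lm:BMO_PSH_4} gives $|\psi_{B_{j+1}}-\psi_{B_j}|\le C_0$ at each dyadic step since the volume ratio is a fixed power of two, so $|\psi_{B_k}-\psi_{B_0}|\le C_0k$. Choosing $N$ with $(2N-n-1)\log 2>tC_0$ makes the dyadic series converge, and the tail where $2^kr\gtrsim 1$ is $O(r^{2N})$, which is negligible against $r^{n+1}e^{-t\psi_{\tilde B_r(\zeta)}}$ since $\psi$ is bounded above. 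The trade-off: the paper's $\bar\partial$-construction is shorter and avoids any summation, but imports the $L^2$-machinery; your argument is elementary and self-contained in the real-variable framework the paper sets up, at the cost of the dyadic bookkeeping and the $N$-dependence on $t$ through the telescoping constant.

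One small point worth stating explicitly if you write this up: the sub-mean-value inequality you use in the upper bound is over a nonisotropic ball (a product of a disc and a ball), not a Euclidean ball, so you should note that for psh $u$ this follows by iterating the one-variable sub-mean-value property over each factor of the product.
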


    We start with a few elementary lemmas. For each $\zeta\in B_1$ we denote by $T_\zeta$ the holomorphic automorphism of $B_1$ which maps $\zeta$ onto the origin.

    \begin{lemma}\label{lm:Ball}
    Let $0<r<1$ and $\zeta_r:=(1-r,0,\cdots,0)$. Then we have
    \begin{equation}\label{eq:Ball_1}
     D_r:=\left\{z:|z_1-(1-r)|<r/2,|z'|<\sqrt{r/2}\right\}\subset B_1
    \end{equation}
    \begin{equation}\label{eq:Ball_2}
    \left\{z:|T_{\zeta_r}(z)|<1/\sqrt{2}\right\}\subset D_r':=\left\{z:|z_1-1|<10r,|z'|<\sqrt{20r}\right\}.
        \end{equation}
    \end{lemma}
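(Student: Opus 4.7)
The plan is to verify both containments by elementary direct calculation, since the lemma is purely computational.

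For (\ref{eq:Ball_1}), any $z=(z_1,z')\in D_r$ satisfies
$|z_1|\le|z_1-(1-r)|+(1-r)<r/2+(1-r)=1-r/2$, hence $|z_1|^2<1-r+r^2/4$.
Combined with $|z'|^2<r/2$ this yields $|z|^2<1-r/2+r^2/4<1$ for $0<r<1$, so $D_r\subset B_1$.

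For (\ref{eq:Ball_2}), I would apply the standard explicit formula for the M\"obius involution of $B_1$ sending $\zeta_r$ to the origin,
$$
T_{\zeta_r}(z)=\frac{1}{1-(1-r)z_1}\bigl((1-r)-z_1,\ -\sqrt{2r-r^2}\,z'\bigr),
$$
and change variables by setting $u:=(1-r)-z_1$, so that $1-(1-r)z_1=(2r-r^2)+(1-r)u$. The hypothesis $|T_{\zeta_r}(z)|^2<1/2$ then translates into
$$
|u|^2+(2r-r^2)|z'|^2<\tfrac12\,\bigl|(2r-r^2)+(1-r)u\bigr|^2.
$$
Dropping the $|z'|^2$ term, taking square roots and rearranging yields $|u|\le r(2-r)/(\sqrt2-1+r)<5r$, and consequently $|z_1-1|\le r+|u|<10r$. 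Substituting this bound on $|u|$ back into the displayed inequality and dividing through by $2r-r^2$ then produces $|z'|^2<20r$.

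The only real obstacle is the arithmetic book-keeping of the constants, but since the bounds $10r$ and $\sqrt{20r}$ in (\ref{eq:Ball_2}) leave ample slack, a straightforward numerical check valid for all $0<r<1$ completes the argument. No ideas beyond the explicit automorphism formula and the triangle inequality are needed.
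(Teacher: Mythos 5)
Your proof is correct. For \eqref{eq:Ball_1} your calculation is identical to the paper's. For \eqref{eq:Ball_2} you take a slightly different but equally elementary route: you write out the explicit M\"obius formula
$T_{\zeta_r}(z)=\bigl((1-r)-z_1,\ -\sqrt{2r-r^2}\,z'\bigr)/\bigl(1-(1-r)z_1\bigr)$
and work directly with $|T_{\zeta_r}(z)|^2$, whereas the paper instead invokes the closed-form identity
$1-|T_{\zeta_r}(z)|^2=\dfrac{(1-|z|^2)(1-|\zeta_r|^2)}{|1-\langle z,\zeta_r\rangle|^2}$
and extracts the bounds on $|1-z_1|$ and $|z'|$ from the resulting inequality $1-|z|^2>\tfrac12\,|1-(1-r)z_1|^2/(2r-r^2)$. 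Both are one-line facts from the standard references (Rudin, Zhu). Your substitution $u=(1-r)-z_1$ turning the denominator into $(2r-r^2)+(1-r)u$ cleanly linearizes the algebra: dropping $|z'|^2$ gives $(\sqrt2-1+r)|u|<r(2-r)$, hence $|u|<5r$ and $|z_1-1|\le r+|u|<6r<10r$; and re-inserting this bound while keeping the $(1-r)$ factor gives $|z'|^2<r(7-6r)^2/(2(2-r))\le 49r/4<20r$. One caution: at the last step you should keep the factor $(1-r)$ when bounding $(1-r)|u|$, since the cruder estimate $(1-r)|u|<|u|<5r$ only yields $|z'|^2<49r/(2(2-r))$, which exceeds $20r$ as $r\to1$; with the $(1-r)$ factor retained the constants do close. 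The paper's route using $1-|T_{\zeta_r}(z)|^2$ avoids the explicit automorphism formula and also exploits the inequality $|z'|^2<1-|z_1|^2\le 2|1-z_1|$, which gives the $|z'|$ bound essentially for free once $|1-z_1|<10r$ is known. Both arguments are complete and the constants work out; yours is a legitimate alternative.
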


    \begin{proof}
    If $z\in D_r$, then $|z_1|<1-r/2$, so that
    $$
    |z|^2<(1-r/2)^2+r/2=1-r/2+r^2/4<1,
    $$
    i.e. (\ref{eq:Ball_1}) is verified.

    Now suppose $|T_{\zeta_r}(z)|^2<1/2$. By the standard formula
    \begin{equation}\label{eq:Rudin}
    1-|T_{\zeta_r}(z)|^2=\frac{(1-|z|^2)(1-|\zeta_r|^2)}{|1-\langle z,\zeta_r\rangle|^2}
    \end{equation}
    (see \cite{Zhu05}, p.\,5), we conclude that
    \begin{eqnarray*}
     2|1-z_1| & \ge & 2(1-|z_1|)\ge 1-|z_1|^2\ge 1-|z|^2\\
     & \ge & \frac12\cdot\frac{|1-(1-r)z_1|^2}{2r-r^2}\\
     & \ge & \frac{(|1-z_1|-r|z_1|)^2}{4r-2r^2}.
    \end{eqnarray*}
    With $c_1:=r|z_1|$ and $c_2:=4r-2r^2$ we have
    $$
    |1-z_1|\le c_1+c_2+\sqrt{2c_1 c_2+c_2^2}<10 r.
    $$
    On the other hand, we have
    $$
    |z'|^2<1-|z_1|^2\le 2|1-z_1|<20 r.
    $$
    Thus (\ref{eq:Ball_2}) is verified.
    \end{proof}

      \begin{lemma}\label{lm:BMO_PSH_4}
 Let $V$ be a measurable set in ${\mathbb R}^n$ and $\psi\in L^1(V)$.  For each measurable set $W\subset V$  we have
  \begin{equation}\label{eq:BMO_1}
 |\psi_W-\psi_V | \le  \frac{|V|}{|W|}\fint_V |\psi-\psi_V|
 \end{equation}
 \begin{equation}\label{eq:BMO_2}
 \fint_W |\psi-\psi_W|\le  \frac{2|V|}{|W|}\fint_V |\psi-\psi_V|.
 \end{equation}
   \end{lemma}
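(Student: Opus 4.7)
The plan is to deduce both inequalities from the triangle inequality, after rewriting the averages so that $\psi_V$ can be subtracted freely. Since $\psi_W$ is a constant, the key identity to exploit throughout is $\fint_W c = c$ for any constant $c$, so that $\psi_W - \psi_V = \fint_W (\psi - \psi_V)$ and likewise $\psi - \psi_W = (\psi - \psi_V) - (\psi_W - \psi_V)$.

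For the first inequality, I would start from
\[
\psi_W - \psi_V = \frac{1}{|W|}\int_W (\psi - \psi_V),
\]
apply the triangle inequality to pull the modulus inside the integral, and then enlarge the domain of integration from $W$ to $V$, which is legitimate because $W \subset V$ and the integrand $|\psi - \psi_V|$ is nonnegative. The factor $|V|/|W|$ then appears automatically when one rewrites $|W|^{-1}\int_V = (|V|/|W|)\fint_V$, giving (\ref{eq:BMO_1}).

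For the second inequality, I would insert $\pm \psi_V$ inside the absolute value in $\fint_W |\psi - \psi_W|$ and split via the triangle inequality into $\fint_W |\psi - \psi_V|$ plus the constant $|\psi_V - \psi_W|$. The first piece is bounded by $(|V|/|W|)\fint_V |\psi - \psi_V|$ by the same enlargement trick used above, and the second piece is bounded by the same quantity thanks to (\ref{eq:BMO_1}). Adding the two contributions yields the factor $2$ in (\ref{eq:BMO_2}).

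There is really no obstacle here: the lemma is a purely measure-theoretic statement, requires no hypothesis on $\psi$ beyond local integrability, and the entire argument is the standard BMO computation relating averages over nested sets. The only minor care needed is to keep track of the normalizing constants $|W|$ versus $|V|$ when passing between $\fint$ and $\int$, which is the reason the ratio $|V|/|W|$ appears as a prefactor rather than $1$.
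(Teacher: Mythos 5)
Your argument coincides with the paper's: both inequalities are obtained by the same insertion of $\pm\psi_V$, the triangle inequality, and enlarging the domain of integration from $W$ to $V$, with the second estimate combining $\fint_W|\psi-\psi_V|$ and $|\psi_W-\psi_V|$ exactly as in the paper's two-line computation. Correct and essentially identical in approach.
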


  \begin{proof}
  First of all, we have
  \begin{eqnarray*}
  |\psi_W-\psi_V | & \le &  \frac1{|W|}\int_{W} |\psi-\psi_{V}|\\
  & \le &  \frac1{|W|}\int_{V} |\psi-\psi_V|\\
  & \le &  \frac{|V|}{|W|}\fint_V |\psi-\psi_V|.
  \end{eqnarray*}
  Next we have
  \begin{eqnarray*}
   \fint_{W} |\psi-\psi_W| & \le &  \fint_{W} \left[|\psi-\psi_V|+|\psi_W-\psi_V|\right]\\
   & \le & \frac{2|V|}{|W|}\fint_V |\psi-\psi_V|.
  \end{eqnarray*}
  \end{proof}

  \begin{lemma}\label{lm:ReverseJensen}
   Let $\psi$ be a psh function in a neighborhood of $\overline{B}_R$ where $R>1$. For each $0\le t<\tilde{\varepsilon}_{B_R}(\psi)$ there exists a constant $C>0$ such that
   \begin{equation}\label{eq:ReverseJensen}
  \fint_D e^{{t \psi}}\le C e^{t \psi_{D}} \ \ \ {and\ \ \ } \fint_D e^{-{t \psi}}\le C e^{-t \psi_{D}}
 \end{equation}
 for all $D\in \tilde{\mathcal B}(B_R)$.
  \end{lemma}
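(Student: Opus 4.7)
The plan is to derive the estimate directly from the definition of the John--Nirenberg exponent, without using any deeper property of plurisubharmonicity. The key observation is that pulling the constant $e^{\pm t\psi_D}$ out of the average leaves behind $e^{\pm t(\psi-\psi_D)}$, which is controlled by $e^{t|\psi-\psi_D|}$.

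First, since $t<\tilde{\varepsilon}_{B_R}(\psi)$, I would pick some $\varepsilon$ with $t<\varepsilon<\tilde{\varepsilon}_{B_R}(\psi)$. By the very definition of $\tilde{\varepsilon}_{B_R}(\psi)$ as the supremum of all such exponents, we obtain
\[
M:=\sup_{D\in\tilde{\mathcal B}(B_R)}\fint_D e^{\varepsilon|\psi-\psi_D|}<\infty.
\]
Because $t<\varepsilon$ and $|\psi-\psi_D|\ge 0$, we have the pointwise bound $e^{t|\psi-\psi_D|}\le e^{\varepsilon|\psi-\psi_D|}$, hence $\fint_D e^{t|\psi-\psi_D|}\le M$ uniformly in $D\in\tilde{\mathcal B}(B_R)$.

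Next, for the first inequality, I would simply factor
\[
\fint_D e^{t\psi}=e^{t\psi_D}\fint_D e^{t(\psi-\psi_D)}\le e^{t\psi_D}\fint_D e^{t|\psi-\psi_D|}\le M\, e^{t\psi_D},
\]
and for the second inequality the identical factorization with $-t$ in place of $t$ yields
\[
\fint_D e^{-t\psi}=e^{-t\psi_D}\fint_D e^{-t(\psi-\psi_D)}\le e^{-t\psi_D}\fint_D e^{t|\psi-\psi_D|}\le M\, e^{-t\psi_D},
\]
so that $C=M$ works for both. Since the argument is entirely formal and uses only the triangle-style bound $\pm(\psi-\psi_D)\le|\psi-\psi_D|$, there is no real obstacle; the content of the lemma is already encoded in the finiteness statement that defines $\tilde{\varepsilon}_{B_R}(\psi)$. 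The substantive work --- proving that $\tilde{\varepsilon}_{B_R}(\psi)>0$ for psh $\psi$ --- is deferred to Theorem~\ref{th:JohnNirenberg}.
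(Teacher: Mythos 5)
Your proof is correct and is exactly the paper's argument: the paper likewise observes that $t<\tilde\varepsilon_{B_R}(\psi)$ gives $\sup_D\fint_D e^{t|\psi-\psi_D|}\le C$ (which holds for $t$ itself, since the admissible set of exponents is an interval containing $0$ by monotonicity of $\varepsilon\mapsto e^{\varepsilon|\cdot|}$), and then factors out $e^{\pm t\psi_D}$. The only cosmetic difference is that you introduce an intermediate exponent $\varepsilon\in(t,\tilde\varepsilon_{B_R}(\psi))$, which is not needed but harmless.
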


  \begin{proof}
   From the inequality
   $$
   \fint_D e^{{t |\psi-\psi_D|}}\le C,
   $$
   we immediately get (\ref{eq:ReverseJensen}).
  \end{proof}

  \begin{remark}
   By  Jensen's inequality
  \begin{equation}
e^{t \psi_{D}}\le  \fint_D e^{{t \psi}} \ \ \ { and\ \ \ }  e^{-t \psi_{D}}\le  \fint_D e^{-{t \psi}},
  \end{equation}
  one may call (\ref{eq:ReverseJensen}) the reverse Jensen inequality.
  \end{remark}

    \begin{proof}[Proof of Theorem \ref{th:BergmanEstimate}]
     Without loss of generality, we assume that $\zeta=(1,0,\cdots,0)$.  Set $\zeta_r=(1-r)\zeta$.    For each $f\in A^2_{t\psi}$ we have
  \begin{eqnarray*}
   |f(\zeta_r)|   \le   \fint_{D_r}|f| & \le & \left[\fint_{D_r}|f|^2 e^{-t\psi}\right]^{1/2}\left[\fint_{D_r} e^{t\psi}\right]^{1/2} \\
  & \le & \left[\int_{B_1} |f|^2 e^{-t\psi}\right]^{1/2}\left[ \frac1{|D_r|}\fint_{D_r} e^{t\psi}\right]^{1/2} \\
  &\le & \left[\int_{B_1} |f|^2 e^{-t\psi}\right]^{1/2}\left[ C r^{-n-1} e^{t\psi_{D_r}} \right]^{1/2}
       \end{eqnarray*}
 where the last inequality follows from Lemma \ref{lm:ReverseJensen}.
  As
 $$
 \left|\psi_{D_r}-\psi_{\tilde{B}_r(\zeta)}\right|\le C, \ \ \ r\ll1
 $$
 in view of Lemma \ref{lm:BMO_PSH_4}, we have
 $$
 K_{t\psi}(\zeta_r)\le \sup_{f\in A^2_{t\psi}}\frac{|f(\zeta_r)|^2}{\|f\|_{r\psi}^2}\le C r^{-n-1} e^{t\psi_{\tilde{B}_r(\zeta)}}.
 $$

 For the lower bound of $K_{t\psi}$ we shall use $L^2-$estimates of the $\bar{\partial}-$equation in a standard way (compare \cite{Chen99}). Let $g(z,w)$ be the pluricomplex Green function of $B_1$ with pole at $w$, i.e.
 $$
 g(z,w)=\log |T_w(z)|
 $$
 where $T_w$ is the holomorphic automorphism of $B_1$ which maps $w$ onto the origin. Set $g_r(z)=g(z,\zeta_r)$.
  Choose a smooth cut-off function $\chi:{\mathbb R}\rightarrow [0,1]$ such that $\chi|_{(-\infty,-1]}=1$ and $\chi|_{[0,\infty)}=0$.  By the Donnelly-Fefferman  estimate (see e.g. \cite{BerndtssonCharpentier}), we may find a solution of
  $$
  \bar{\partial} u=\bar{\partial}\left[\chi\left(-\log(-g_r)+\log\log \sqrt{2}\right)\right]=:v
  $$
  which satisfies
  \begin{eqnarray*}
   \int_{B_1} |u|^2 e^{-t\psi-2ng_r} & \le & C_0  \int_{B_1} |v|^2_{-i\partial\bar{\partial}\log(-g_r)} e^{-t\psi-2ng_r} \\
   &\le & C_{n} \int_{\{|T_{\zeta_r}|< 1/\sqrt{2}\}}  e^{-t\psi}\\
   & \le & C_n \int_{D_r'} e^{-t\psi}
      \end{eqnarray*}
      in view of Lemma \ref{lm:Ball},
   where $C_0$ is a universal constant and $C_{n}$ depends only on $n$.
   Set
   $$
   f:=\chi\left(-\log(-g_r)+\log\log \sqrt{2}\right)-u.
   $$
    It follows that
   $
   f\in {\mathcal O}(B_1)$, $f(\zeta_r)=1$, and
   $$
   \int_{B_1} |f|^2 e^{-t\psi}\le C_n \int_{D_r'} e^{-t\psi}.
   $$
   Thus
   \begin{equation}\label{eq:Bergman_1}
   K_{t\psi}(\zeta_r)\ge \frac{|f(\zeta_r)|^2}{\|f\|^2_{t\psi}}\ge C_n^{-1}\left[\int_{D_r'}  e^{-{t\psi}}\right]^{-1}\ge C^{-1} r^{-n-1} e^{t\psi_{D_r'}}
   \end{equation}
   in view of Lemma \ref{lm:ReverseJensen}. As
 $$
 \left|\psi_{D_r'}-\psi_{\tilde{B}_r(\zeta)}\right|\le C
 $$
 in view of Lemma \ref{lm:BMO_PSH_4},
 we establish the desired lower bound.
  \end{proof}

  \begin{proof}[Proof of Theorem \ref{th:Bergman_Lelong}]
    Set $\tilde{P}_r=\{z:|z_1|<r,\max_{j\ge 2}\,|z_j|<\sqrt{r}\}$. Then we have
   \begin{eqnarray*}
    \liminf_{r\rightarrow 0} {\psi_{\tilde{B}_r(\zeta)}}/{\log r} & = &  \liminf_{r\rightarrow 0} {(\psi_\zeta)_{\tilde{B}_r}}/{\log r}\\
    & = &  \liminf_{r\rightarrow 0} (\psi_\zeta)_{\tilde{P}_r}/{\log r}\\
     & \ge & \liminf_{r\rightarrow 0}\frac{1}{\log r}\sup_{\theta_1,\cdots,\theta_n}\psi_\zeta(re^{i\theta_1},r^{1/2}e^{i\theta_2},\cdots,r^{1/2}e^{i\theta_n})\\
    & = & \tilde{\nu}(\psi_\zeta)
   \end{eqnarray*}
   where the second equality follows from  Lemma \ref{lm:BMO_PSH_4} and the inequality follows from the maximum principle for  psh functions.
   On the other hand, for each $r$ we choose 
   $$
   a_r=(re^{i\theta_1(r)},r^{1/2}e^{i\theta_2(r)},\cdots,r^{1/2}e^{i\theta_n(r)})
   $$
    such that
   $$
   \psi_\zeta(a_r)=\sup_{\theta_1,\cdots,\theta_n}\psi_\zeta(re^{i\theta_1},r^{1/2}e^{i\theta_2},\cdots,r^{1/2}e^{i\theta_n}).
   $$
   Set 
   $\tilde{P}_r+a=\{z+a:z\in \tilde{P}_r\}$.
  By Lemma \ref{lm:BMO_PSH_4} we have
      \begin{eqnarray*}
    \limsup_{r\rightarrow 0}{\psi_{\tilde{B}_r(\zeta)}}/{\log r} & = &  \limsup_{r\rightarrow 0} {(\psi_\zeta)_{\tilde{B}_r}}/{\log r}\\
     & = & \limsup_{r\rightarrow 0} {(\psi_\zeta)_{\tilde{P}_r+a_r}}/{\log r}\\
    & \le & \limsup_{r\rightarrow 0} {\psi_\zeta(a_r)}/{\log r}\\
    & = & \tilde{\nu}(\psi_\zeta)
   \end{eqnarray*}
   where the inequality follows from the mean value inequality. Thus by (\ref{eq:Bergman}) we establish (\ref{eq:BergmanVsLelong}).
   \end{proof}

 \section{A local BMO estimate of psh functions}

   A function $\psi\in L^1_{\rm loc}(\Omega)$ is of BMO (bounded mean oscillation) if
 \begin{equation}\label{eq:BMO}
 \|\psi\|_{{\rm BMO}(\Omega)}:=\sup_B \fint_B |\psi-\psi_B|<\infty,
 \end{equation}
 where the supremum is taken over all balls $B\subset\subset \Omega$.
 BMO was first introduced by John-Nirenberg \cite{JohnNirenberg} in connection with PDE, who also proved a crucial inequality:
 \begin{equation}\label{eq:JohnNirenberger}
  \sup_{B\subset \Omega}\fint_B e^{{c_n|\psi-\psi_B|}/\|\psi\|_{{\rm BMO}(\Omega)}}\le C_n
  \end{equation}
  where  $c_n,C_n>0$ depend only on $n$.
  The BMO space  became well-known after Fefferman proved that it is the dual of the real-variable Hardy space $H^1$ (cf. \cite{FeffermanStein}).
 A famous unbounded example of BMO(${\mathbb R}^n$) is $\log |x|$. We refer to Stein \cite{SteinHarmonicBook} for further examples and properties.

For a domain $\Omega\subset {\mathbb C}^n$ we define $PSH^-(\Omega)$ to be the set of negative psh functions on $\Omega$. The purpose of this section is to show the following BMO estimate for psh functions.

 \begin{theorem}\label{th:Main}
 Let $\alpha>1$. If $\psi\in PSH^-(B_{2R})$, then
 \begin{equation}\label{eq:Main}
 \|\psi\|_{{\rm BMO}(B_R)}\le C_{n,\alpha} \left( 1+|\psi_{B_R}|\right)^\alpha
 \end{equation}
  where $C_{n,\alpha}>0$ depends only on $n,\alpha$.
 \end{theorem}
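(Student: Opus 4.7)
The plan is to apply the Riesz decomposition to $\psi$ on a slightly shrunken ball (say $B_{3R/2}$) and estimate the resulting pieces separately; the theory of weights enters only in the delicate estimation of the Green potential. Write $\psi=U+h$ on $B_{3R/2}$, where $U(z)=\int G(z,w)\,d\mu(w)\le 0$ is the Green potential of the positive Riesz measure $\mu$ (proportional to $\Delta\psi$, integrated against the negative Green function $G$ of $B_{3R/2}$) and $h$ is the harmonic Poisson extension of $\psi|_{\partial B_{3R/2}}$; by the maximum principle $h\le 0$, hence $|h|\le|\psi|$ pointwise. Integrating $\Delta\psi$ against a smooth cutoff equal to $1$ on $B_{3R/2}$ and supported in $B_{2R}$ and transferring the Laplacian onto the cutoff gives the total mass bound $\mu(B_{3R/2})\le CR^{2n-2}|\psi_{B_R}|$, and the same argument at other scales yields $\mu(B(z_0,\rho))\le C\rho^{-2}\int_{B(z_0,2\rho)}|\psi|$ for balls inside $B_{3R/2}$.

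The harmonic piece is straightforward. Since $|h|\le|\psi|$ and $\fint_{B_{3R/2}}|\psi|=|\psi_{B_{3R/2}}|\le|\psi_{B_R}|$ (using that $\psi_{B_r}$ is non-decreasing in $r$ for $\psi$ psh, so $|\psi_{B_r}|$ is non-increasing in $r$), the interior gradient bound for harmonic functions gives $\|\nabla h\|_{L^\infty(B_R)}\le C|\psi_{B_R}|/R$, so $\|h\|_{\mathrm{BMO}(B_R)}\le C|\psi_{B_R}|$, which is consistent with the desired estimate.

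The potential $U$ is the main content. When $n=1$, the kernel $\log|z-w|$ has universally bounded BMO norm, and Fubini gives $\|U\|_{\mathrm{BMO}(B_R)}\le C\|\mu\|\le C|\psi_{B_R}|$, which is linear in $|\psi_{B_R}|$. When $n\ge 2$, the kernel $|z-w|^{2-2n}$ is too singular to be BMO, and a scale-by-scale analysis is required. For each ball $B=B(z_0,r)\subset B_R$, I would split $\mu=\mu|_{2B}+\mu|_{B_{3R/2}\setminus 2B}$: the far part has a pointwise gradient controlled by $\sum_{k\ge 0}(2^kr)^{1-2n}\mu(B(z_0,2^{k+1}r))$, so the mass bounds above immediately give a uniform oscillation bound on $B$; the near part is treated via Hedberg-type $L^p$ estimates for Riesz potentials, using that the kernel $|z|^{-(2n-2)}$ belongs to a Muckenhoupt class $A_p$ for a suitable $p>1$, which converts a mass bound on $\mu|_{2B}$ into an oscillation bound on $U$. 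This last ingredient is where the theory of weights from \cite{SteinHarmonicBook} enters.

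The main obstacle, and the source of the super-linear exponent in $(1+|\psi_{B_R}|)^\alpha$, is the uniform control of $\mu(B(z_0,\rho))$ for small $\rho$. The bound above involves the local average $\fint_{B(z_0,2\rho)}|\psi|$, which can be much larger than $|\psi_{B_R}|$ near singular points of $\psi$, so a purely linear estimate is not to be expected. I anticipate that one must invoke a quantitative psh growth estimate of the form $|\psi|_{B(z_0,\rho)}\le C(1+|\psi_{B_R}|)^\alpha(1+|\log(\rho/R)|)$ to relate local averages to the global one; once fed into the dyadic scale decomposition, the logarithmic corrections supply precisely the flexibility to take any $\alpha>1$, at the cost of giving up the linear dependence on $|\psi_{B_R}|$.
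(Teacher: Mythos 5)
Your strategy—Riesz decomposition in $n$ complex dimensions into a Green potential $U$ plus a harmonic remainder $h$, with $h$ handled by the interior gradient estimate and $U$ by a dyadic scale decomposition with weight theory—is genuinely different from what the paper does, but it stalls at precisely the step you flag at the end. The scale-by-scale control of $U$ requires a uniform bound on $\mu(B(z_0,\rho))$ for all small balls with center anywhere in $B_R$, and your estimate $\mu(B(z_0,\rho))\le C\rho^{-2}\int_{B(z_0,2\rho)}|\psi|$ then demands uniform control of $\fint_{B(z_0,2\rho)}|\psi|$ near singular points of $\psi$. The ``quantitative psh growth estimate'' $|\psi|_{B(z_0,\rho)}\le C(1+|\psi_{B_R}|)^\alpha(1+|\log(\rho/R)|)$ you propose to feed into the dyadic sum is, up to constants, exactly what the theorem itself plus the John--Nirenberg inequality yield: it is the standard ball-to-ball mean-drift bound $|\psi_{B(z_0,\rho)}-\psi_{B_R}|\lesssim \|\psi\|_{\mathrm{BMO}(B_R)}\log(R/\rho)$. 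So invoking it is circular; the whole difficulty of the theorem is hidden in that one input, and there is no off-the-shelf source for it.

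The paper sidesteps the issue by never touching the $n$-dimensional Riesz measure or Green potential. It reduces to one complex variable: Lemma \ref{lm:BMO_PSH_3} bounds the polydisc mean oscillation $\fint_P|\psi-\psi_P|$ by a sum of one-variable quantities $\fint_{|z_k|<3R_k/2}(1+|\psi(0,\ldots,z_k,\ldots,0)|)^\alpha$ along coordinate lines, using the one-variable Riesz decomposition (Lemma \ref{lm:BMO_PSH_1}) together with H\"ormander's slicing inequality (\ref{eq:Hormander}) in Lemma \ref{lm:BMO_PSH_2}. The substitute for your missing local bound is Lemma \ref{lm:BMO_PSH_5}: a counting/volume argument (not potential theory) produces $n$ mutually orthogonal complex lines through the center $a$, each containing a point $b^{(k)}$ where $\psi(b^{(k)})\ge C_n\psi_{B(a,R/3)}$; the one-variable mean value inequality along each such line then controls $\fint_{|z_k|<R/2}|\psi(\ldots,z_k,\ldots)|^\alpha$ by $|\psi(b^{(k)})|^\alpha\le C|\psi_{B(a,R/3)}|^\alpha$, which the doubling Lemma \ref{lm:Doubling} and Lemma \ref{lm:BMO_PSH_4} promote to $|\psi_{B_R}|^\alpha$. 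That geometric line-selection step is the real content of the theorem, and your proposal has no analogue of it.

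Two smaller points. First, your diagnosis that the exponent $\alpha>1$ is a purely $n\ge 2$ phenomenon caused by the Riesz kernel $|z|^{2-2n}$ not being BMO is not how it enters the paper: the exponent already appears in the one-variable Lemma \ref{lm:BMO_PSH_1} via the weighted integration-by-parts inequality of Lemma \ref{lm:Laplace}, and is then forced to propagate because the slicing Lemma \ref{lm:BMO_PSH_2} needs $|\psi|^\alpha$ (not $|\psi|$) to iterate H\"ormander's inequality across the variables. Second, the Hedberg/Muckenhoupt treatment of the near part of $U$ is left at the level of a sketch; even granting the local mass bounds, translating an $A_p$ property of the kernel into a uniform BMO oscillation bound for $U$ over all balls and scales would require a careful argument that is not written down, and in any case it is moot so long as the local mass bounds themselves are unavailable.
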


 Theorem \ref{th:Main} will be deduced from a number of lemmas.

  \begin{lemma}\label{lm:Integ_Ineq_mfd}
   Let $\psi,\phi$ be two real $C^2$ function on a domain $\Omega\subset {\mathbb R}^n$. Let $\eta:{\mathbb R}\rightarrow (0,\infty)$ be a $C^1$ function with $\eta'>0$. If either $\phi$ or $\psi$ has compact support in $\Omega$, then
    \begin{equation}\label{eq:Laplace-}
   \int_\Omega {\phi^2}\left[\frac{2\Delta\psi}{\eta (-\psi)} + \frac{\eta'(-\psi)}{\eta^2(-\psi)}{|\nabla\psi|^2}\right]
   \le 4 \int_\Omega \frac{|\nabla\phi|^2}{\eta'(-\psi)}.
  \end{equation}
 \end{lemma}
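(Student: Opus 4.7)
The plan is to derive \eqref{eq:Laplace-} by integration by parts combined with a Cauchy--Schwarz absorption, after introducing the auxiliary function
\[
F(s):=\frac{1}{\eta(-s)},\qquad \text{so that}\qquad F'(s)=\frac{\eta'(-s)}{\eta^{2}(-s)}>0.
\]
With this notation the integrand on the left of \eqref{eq:Laplace-} becomes $\phi^{2}\bigl[\,2F(\psi)\Delta\psi+F'(\psi)|\nabla\psi|^{2}\bigr]$, and the integrand on the right becomes $4|\nabla\phi|^{2}\,F(\psi)^{2}/F'(\psi)$ (a direct computation). So the whole lemma is a statement purely about $F$.

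First I would integrate by parts the term $\int_\Omega \phi^{2}F(\psi)\Delta\psi$, using the compact support assumption on $\phi$ or $\psi$ to discard boundary contributions. Expanding $\nabla(\phi^{2}F(\psi))=2\phi F(\psi)\nabla\phi+\phi^{2}F'(\psi)\nabla\psi$ yields
\[
\int_\Omega \phi^{2}F(\psi)\Delta\psi=-2\int_\Omega \phi F(\psi)\nabla\phi\cdot\nabla\psi-\int_\Omega \phi^{2}F'(\psi)|\nabla\psi|^{2}.
\]
Substituting this back and combining the two $F'(\psi)|\nabla\psi|^{2}$ terms gives the identity
\[
\int_\Omega \phi^{2}\bigl[\,2F(\psi)\Delta\psi+F'(\psi)|\nabla\psi|^{2}\bigr]=-4\int_\Omega \phi F(\psi)\nabla\phi\cdot\nabla\psi-\int_\Omega \phi^{2}F'(\psi)|\nabla\psi|^{2}.
\]

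Second, I would apply Cauchy--Schwarz to the cross term with the splitting
\[
4|\phi|F(\psi)|\nabla\phi||\nabla\psi|=2\Bigl(\tfrac{2F(\psi)}{\sqrt{F'(\psi)}}|\nabla\phi|\Bigr)\Bigl(|\phi|\sqrt{F'(\psi)}|\nabla\psi|\Bigr)\le \tfrac{4F(\psi)^{2}}{F'(\psi)}|\nabla\phi|^{2}+\phi^{2}F'(\psi)|\nabla\psi|^{2}.
\]
The second term on the right exactly cancels the remaining $-\int \phi^{2}F'(\psi)|\nabla\psi|^{2}$ in the identity above, leaving
\[
\int_\Omega \phi^{2}\bigl[\,2F(\psi)\Delta\psi+F'(\psi)|\nabla\psi|^{2}\bigr]\le 4\int_\Omega \frac{F(\psi)^{2}}{F'(\psi)}|\nabla\phi|^{2}=4\int_\Omega \frac{|\nabla\phi|^{2}}{\eta'(-\psi)},
\]
which is \eqref{eq:Laplace-}.

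There is no real obstacle here: the monotonicity hypothesis $\eta'>0$ guarantees $F'>0$ so that the Cauchy--Schwarz splitting makes sense, and $\eta>0$ keeps all weights positive. The only point demanding a bit of care is choosing the precise Cauchy--Schwarz weighting so that the absorbed term matches the sign of the already-present $-\int \phi^{2}F'(\psi)|\nabla\psi|^{2}$, producing cancellation rather than accumulation; the weighting above does exactly this.
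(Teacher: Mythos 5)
Your proof is correct and follows essentially the same route as the paper: integrate $\int\phi^2\frac{\Delta\psi}{\eta(-\psi)}$ by parts, then absorb the cross term via a weighted Cauchy--Schwarz inequality whose weights are chosen to cancel the $\phi^2F'(\psi)|\nabla\psi|^2$ term. The only cosmetic difference is that you introduce $F=1/\eta(-\cdot)$ and normalize the Cauchy--Schwarz split so that cancellation is exact, whereas the paper absorbs half the gradient term and multiplies through by $2$ at the end; the two computations are otherwise identical.
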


 \begin{proof}
   Integration by parts gives
  \begin{eqnarray*}
 && \int_\Omega \frac{\phi^2}{\eta (-\psi)}\Delta\psi
 =-\int_\Omega \nabla \psi \cdot \nabla \left[\frac{\phi^2}{\eta(-\psi)}\right]\\
  & = & -2 \int_\Omega \phi \frac{\nabla \psi}{\eta(-\psi)}\cdot \nabla \phi
  - \int_\Omega \phi^2 \frac{\eta'(-\psi)}{\eta^2(-\psi)}|\nabla \psi|^2,
\end{eqnarray*}
so that
\begin{eqnarray*}\label{eq:IntegByParts_1}
 && \int_\Omega \frac{\phi^2}{\eta (-\psi)}\Delta\psi
 +\int_\Omega \phi^2 \frac{\eta'(-\psi)}{\eta^2(-\psi)} |\nabla \psi|^2\\
  & =  &  -2 \int_\Omega \phi \frac{\nabla \psi}{\eta(-\psi)}\cdot \nabla \phi \\
 & \le & \frac12  \int_\Omega  \phi^2 \frac{\eta'(-\psi)}{\eta^2(-\psi)}|\nabla \psi|^2
 + 2 \int_\Omega \frac{|\nabla\phi|^2}{\eta'(-\psi)},
\end{eqnarray*}
from which (\ref{eq:Laplace-}) immediately follows.
\end{proof}

 \begin{lemma}\label{lm:Laplace}
 Let $\alpha>1$. If $\psi$ is a negative subharmonic function on a domain $\Omega\subset {\mathbb R}^n$, then
    $$
 \int_\Omega \phi^2 \Delta \psi\le \frac4{\alpha-1} \int_\Omega (1+|\psi|)^\alpha |\nabla \phi|^2,\ \ \ \phi\in C^\infty_0(\Omega).
 $$
\end{lemma}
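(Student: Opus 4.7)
The plan is to apply Lemma \ref{lm:Integ_Ineq_mfd} with a carefully chosen weight $\eta$. Inspection of the right-hand side of (\ref{eq:Laplace-}) shows that $1/\eta'(-\psi)$ appears, while the target right-hand side features $(1+|\psi|)^\alpha$. This forces the choice
$$\eta'(s)=(1+s)^{-\alpha},\qquad \eta(s)=c+\frac{1-(1+s)^{1-\alpha}}{\alpha-1},$$
where $c>0$ is a free parameter. The crucial observation, valid precisely because $\alpha>1$, is that $\int_0^\infty(1+s)^{-\alpha}\,ds<\infty$, so $\eta$ stays bounded: $\eta(s)\le c+\frac1{\alpha-1}$ uniformly for $s\ge 0$. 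Also $\eta,\eta'>0$ throughout the range of $-\psi\ge 0$, so the hypotheses of Lemma \ref{lm:Integ_Ineq_mfd} are met.

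Assume first $\psi\in C^2$. Apply Lemma \ref{lm:Integ_Ineq_mfd} with this $\eta$. The gradient term $\phi^2\,\eta'(-\psi)|\nabla\psi|^2/\eta^2(-\psi)$ on the left is nonnegative and may be discarded, yielding
$$\int_\Omega\phi^2\,\frac{\Delta\psi}{\eta(-\psi)}\le 2\int_\Omega (1+|\psi|)^\alpha |\nabla\phi|^2.$$
Since $\psi$ is subharmonic, $\Delta\psi\ge 0$, and combining with the uniform upper bound $\eta(-\psi)\le c+\frac1{\alpha-1}$ gives
$$\int_\Omega\phi^2\,\Delta\psi\le 2\left(c+\frac1{\alpha-1}\right)\int_\Omega(1+|\psi|)^\alpha|\nabla\phi|^2.$$
Letting $c\downarrow 0$ produces the desired inequality (in fact with the slightly better constant $\frac{2}{\alpha-1}\le\frac{4}{\alpha-1}$).

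For general subharmonic $\psi$, which need not be $C^2$, I would use a standard mollification: on a neighborhood of $\operatorname{supp}(\phi)$, the regularizations $\psi_\epsilon:=\psi*\rho_\epsilon$ are smooth, negative, subharmonic, and decrease pointwise to $\psi$ as $\epsilon\downarrow 0$. Apply the $C^2$ case to $\psi_\epsilon$, then pass to the limit: monotone convergence handles the right-hand side (since $(1+|\psi_\epsilon|)^\alpha\uparrow(1+|\psi|)^\alpha$), while weak convergence of the positive Radon measures $\Delta\psi_\epsilon\,dx\to\Delta\psi$ paired against the continuous compactly supported $\phi^2$ handles the left-hand side.

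The main obstacle is guessing the correct $\eta$; once one sees that one must match $1/\eta'(-\psi)$ to $(1+|\psi|)^\alpha$ \emph{and simultaneously} keep $\eta(-\psi)$ bounded so that $\Delta\psi$ can be freed of the weight, everything collapses to two lines of calculation plus a routine regularization. This also explains the blow-up as $\alpha\downarrow 1$, which is the exact threshold where $\eta$ ceases to be bounded.
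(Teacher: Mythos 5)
Your proof is correct and follows essentially the same approach as the paper: the same weighted integration-by-parts inequality (Lemma \ref{lm:Integ_Ineq_mfd}) with $\eta'(s)$ proportional to $(1+s)^{-\alpha}$, the same observation that $\alpha>1$ keeps $\eta$ bounded above so the Laplacian can be freed of the weight after discarding the nonnegative gradient term, and the same regularization by a decreasing sequence of smooth negative subharmonic approximants. The paper's choice $\eta(t)=2-(1+t)^{1-\alpha}$ is, modulo the invariance of (\ref{eq:Laplace-}) under $\eta\mapsto k\eta$, exactly your $\eta$ with $c=1/(\alpha-1)$; your remark that letting $c\downarrow0$ sharpens the constant to $2/(\alpha-1)$ is a valid minor improvement that of course also yields the stated $4/(\alpha-1)$ bound.
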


\begin{proof}
We take a decreasing sequence of smooth subharmonic functions $\psi_j<0$ defined in a neighborhood of ${\rm supp\,}\phi$ such that $\psi_j\downarrow \psi$.  Applying (\ref{eq:Laplace-}) with $\eta(t)=2-(1+t)^{1-\alpha}$, we have
  $$
  \int_\Omega \frac{\phi^2}{\eta(-\psi_j)}\Delta\psi_j \le \frac2{\alpha-1} \int_\Omega (1+|\psi_j|)^\alpha |\nabla \phi|^2\le \frac2{\alpha-1} \int_\Omega (1+|\psi|)^\alpha |\nabla \phi|^2.
  $$
  As $\eta< 2$, we have
  \begin{eqnarray*}
   \int_\Omega \phi^2 \Delta\psi & = & \lim_{j\rightarrow \infty}  \int_\Omega \phi^2 \Delta\psi_j\\
   & \le & \frac4{\alpha-1} \int_\Omega (1+|\psi|)^\alpha |\nabla \phi|^2.
  \end{eqnarray*}
\end{proof}

\begin{lemma}\label{lm:Doubling}
 If $\psi\in SH^-(2B)$, then
 \begin{equation}\label{eq:Doubling}
 \int_{2B} |\psi| \le 2^n \int_{B} |\psi|.
 \end{equation}
 \end{lemma}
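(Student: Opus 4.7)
The plan is to reduce everything to spherical means and exploit the monotonicity that comes from subharmonicity. Let $B = B(x_0, R)$ and define the normalized spherical mean
$$M(r) := \frac{1}{\sigma_{n-1} r^{n-1}} \int_{\partial B(x_0, r)} \psi\, d\sigma.$$
The two classical facts I would invoke are: (i) for $\psi$ subharmonic, $M(r)$ is nondecreasing in $r$; (ii) since $\psi \le 0$ on $2B$, $M(r) \le 0$, so $|M(r)| = -M(r)$ is nonincreasing.

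Next I would rewrite both sides of the claimed inequality in polar coordinates,
$$\int_{B} |\psi| = \sigma_{n-1}\int_0^R r^{n-1}|M(r)|\,dr, \qquad \int_{2B} |\psi| = \sigma_{n-1}\int_0^{2R} r^{n-1}|M(r)|\,dr,$$
and split the second integral as $\int_0^R + \int_R^{2R}$. The point is that the split pieces can be compared through the single value $|M(R)|$. Since $|M|$ is nonincreasing, on the annulus $[R,2R]$ we have $|M(r)|\le |M(R)|$, giving
$$\int_{2B\setminus B}|\psi| \le |M(R)|\,\sigma_{n-1}\int_R^{2R} r^{n-1}\,dr = |M(R)|\cdot (2^n-1)|B|.$$
Conversely, on $[0,R]$ the same monotonicity gives $|M(r)| \ge |M(R)|$, hence
$$\int_B |\psi| \ge |M(R)|\,\sigma_{n-1}\int_0^R r^{n-1}\,dr = |M(R)|\cdot |B|.$$

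Combining the two yields $\int_{2B\setminus B}|\psi| \le (2^n - 1)\int_B |\psi|$, and adding $\int_B|\psi|$ to both sides produces the desired estimate $\int_{2B}|\psi| \le 2^n \int_B|\psi|$. I expect no real obstacle here: the only mild technical point is that $\psi$ is merely upper semicontinuous and locally integrable, so $M(r)$ must be interpreted in the standard subharmonic sense (as a limit of averages of smooth approximants or via a limit of $L^1$ averages over thin annuli), but both the monotonicity of $M(r)$ and Fubini/polar coordinates extend without change to this setting. Note that the argument uses only the sign condition and subharmonicity; plurisubharmonicity is not required, matching the hypothesis $\psi \in SH^-(2B)$.
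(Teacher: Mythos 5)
Your proof is correct and follows essentially the same route as the paper: both use the monotonicity of the spherical mean $M(r)$ for subharmonic functions (so $|M(r)|$ is nonincreasing since $\psi\le 0$), write the integrals in polar coordinates, and bound $\int_{2B\setminus B}|\psi|$ by $(2^n-1)\int_B|\psi|$ via the value $|M(R)|$. The only cosmetic difference is that the paper chains the two comparisons into a single string of inequalities rather than splitting them into two displayed estimates.
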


\begin{proof}
Let $\sigma_n$ be the volume of the unit sphere in ${\mathbb R}^n$. Write $B=B(a,r)$. Since $\psi$ is a subharmonic function, it follows that the mean value
$$
M_\psi(a,t)=\int_{|y|=1} \psi(a+ty)d\sigma(y)/\sigma_n
$$
is an increasing function of $t\in (0,2r)$  (see \cite{HormanderConvexity}, Theorem 3.2.3), i.e.
$$
M_{|\psi|} (a,t)=\int_{|y|=1} |\psi| (a+ty)d\sigma(y)/\sigma_n
$$
is a decreasing function of $t$. Then we have
\begin{eqnarray*}
  \int_{2B\backslash B} |\psi|& = & \int_r^{2r} M_{|\psi|} (a,t) t^{n-1}\sigma_n dt \\
  & \le & (2^n-1) \frac{ \sigma_n}{n} r^{n} M_{|\psi|} (a,r)\\
  & \le & (2^n-1) \int_{0}^{r} M_{|\psi|} (a,t) t^{n-1}\sigma_n dt\\
  & = & (2^n-1) \int_{B} |\psi|,
\end{eqnarray*}
from which (\ref{eq:Doubling}) immediately follows.
\end{proof}

  For the proof of Theorem \ref{th:Main} we consider at first the one-dimensional case.

  \begin{lemma}\label{lm:BMO_PSH_1}
  Let $\alpha>1$.  If $n=1$ and $\psi\in SH^-({B_{2R}})$, then
  \begin{equation}\label{eq:BMO_PSH_1}
  \fint_B |\psi-\psi_B|\le C_\alpha \fint_{B_{3R/2}}(1+|\psi|)^\alpha
  \end{equation}
  for all balls $B\subset B_{R}$. Here $C_\alpha>0$ depends only on $\alpha$.
  \end{lemma}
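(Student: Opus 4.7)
The plan is to perform a Riesz decomposition of $\psi$ and control the harmonic and potential parts separately. Since both sides of (\ref{eq:BMO_PSH_1}) are invariant under the rescaling $\psi(z)\mapsto\psi(Rz)$, I would first reduce to $R=1$. Then, setting $\mu=\Delta\psi/(2\pi)$ (a nonnegative Radon measure on $B_2$) and $\mu_0=\mathbf{1}_{B_{5/4}}\mu$, I would write $\psi=h+u$ on $B_{5/4}$ with $u(z)=\int\log|z-w|\,d\mu_0(w)$ and $h$ harmonic on $B_{5/4}$.

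The key estimate is on the total mass $\mu(B_{5/4})$: choose $\phi\in C_0^\infty(B_{3/2})$ with $\phi\equiv 1$ on $B_{5/4}$ and $|\nabla\phi|\le C$, apply Lemma \ref{lm:Laplace} (this is where the hypothesis $\alpha>1$ enters, through the factor $4/(\alpha-1)$), and divide by $|B_{3/2}|$ to obtain
\[
\mu(B_{5/4})\le \int\phi^2\,d\mu \le C_\alpha\,\fint_{B_{3/2}}(1+|\psi|)^\alpha.
\]
For the logarithmic potential, the classical fact $\|\log|\cdot|\|_{\mathrm{BMO}(\mathbb R^2)}\le c_0$, combined with Fubini applied to $u-u_B=\int\big(\log|z-w|-(\log|\cdot-w|)_B\big)d\mu_0(w)$, yields $\fint_B|u-u_B|\le c_0\,\mu(B_{5/4})$, which is dominated by the above.

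For the harmonic piece, the mean-value property gives $h_B=h(a)$ (with $a$ the center of $B$), and the interior gradient estimate for harmonic functions gives $\sup_B|\nabla h|\le C\fint_{B_{5/4}}|h|$ since $d(B,\partial B_{5/4})\ge 1/4$. Hence $\fint_B|h-h_B|\le r\sup_B|\nabla h|\le C\fint_{B_{5/4}}|h|$, and it remains to bound $\fint_{B_{5/4}}|h|\le\fint_{B_{5/4}}|\psi|+\fint_{B_{5/4}}|u|$. The first term is immediately $\le C\fint_{B_{3/2}}(1+|\psi|)^\alpha$ because $|\psi|\le(1+|\psi|)^\alpha$ and $B_{5/4}\subset B_{3/2}$; the second is $\le C\mu(B_{5/4})$ via a direct Fubini computation using that $\int_{B(w,5/2)}|\log|z-w||\,dz$ is a universal constant. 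Summing the two oscillation bounds and reversing the scaling yields (\ref{eq:BMO_PSH_1}). The main technical input is Lemma \ref{lm:Laplace}, which couples $\int\phi^2\,d\mu$ to $(1+|\psi|)^\alpha$; the rest is elementary two-dimensional potential theory. The one subtlety worth watching is keeping all constants independent of $R$, which is exactly why the preliminary normalization is essential.
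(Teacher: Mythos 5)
Your proof is correct, and it differs from the paper's in a genuinely interesting way, chiefly in how the harmonic remainder is handled. Both you and the paper begin with the same two key ingredients: controlling the mass of $\Delta\psi$ via Lemma~\ref{lm:Laplace} (exactly your estimate on $\mu(B_{5/4})$, with the same role for $\alpha>1$), and exploiting $\|\log|\cdot|\|_{\mathrm{BMO}(\mathbb{R}^2)}<\infty$ plus Fubini to handle the logarithmic potential. The divergence is in the decomposition and the harmonic part. The paper does not truncate $\mu$; instead it uses the explicit Green function $g_{R'}$ of the disc $B_{R'}$ and the Riesz decomposition $\psi=u+v+h$ where $v$ is the reflection term and $h$ is the \emph{smallest harmonic majorant}, so that $\psi\le h\le 0$. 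That sign condition gives a one-line pointwise bound $-h(z)=\fint_{B(z,R/4)}(-h)\le C\fint_{B_{R'}}(-\psi)$, and the BMO norm of the reflection term $\log\frac{R'}{|R'^2-z\bar\zeta|}$ equals that of $-\log|z|$ after the identity $\log\frac{R'}{|R'^2-z\bar\zeta|}=\log\frac{R'}{|\zeta|}-\log|z-R'^2/\bar\zeta|$, so it is absorbed by the same Fubini argument. You instead truncate $\mu$ to $B_{5/4}$, take the Newtonian potential $u$ alone (which has no sign constraint), and control the remaining harmonic piece $h=\psi-u$ by the interior gradient estimate together with the triangle inequality $\fint_{B_{5/4}}|h|\le\fint_{B_{5/4}}|\psi|+\fint_{B_{5/4}}|u|$. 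Your route avoids any reference to the explicit Green function or to the least-harmonic-majorant structure, at the modest cost of an extra Fubini computation to bound $\fint_{B_{5/4}}|u|$ by $\mu(B_{5/4})$ and of invoking the gradient estimate in place of the sign of $h$; the paper's route is slightly shorter because the sign of $h$ kills the harmonic term immediately. The rescaling reduction to $R=1$ is a clean organizational device that the paper handles implicitly by tracking $R$-dependence; both are fine and yield the same $R$-independence of $C_\alpha$.
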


  \begin{proof}
 Applying Lemma \ref{lm:Laplace}  with  $\phi\in C^\infty_0({B_{3R/2}})$ such that $\phi|_{B_{4R/3}}=1$ and $|\nabla \phi|\le 6/R$, we  conclude that
    \begin{equation}\label{eq:BMO_PSH_2}
   \int_{B_{4R/3}} \Delta\psi\le {C_\alpha}  \fint_{B_{3R/2}}(1+|\psi|)^\alpha.
  \end{equation}
  Let $R'=4R/3$. Recall that the (negative) Green function $g_{R'}$ of $B_{R'}$ is given by
  $$
  g_{R'}(z,w)=\log |z-w|+\log \frac{R'}{|R'^2-z\bar{w}|}.
  $$
  The Riesz decomposition theorem (cf. \cite{HormanderConvexity}, Theorem 3.3.6) gives
  \begin{eqnarray*}
   \psi(z) & = & \frac1{2\pi}\int_{\zeta\in B_{R'}} g_{R'}(z,\zeta)\Delta \psi(\zeta) +h(z)\\
   &  = & \frac1{2\pi} \int_{\zeta\in B_{R'}} \log |z-\zeta| \Delta \psi(\zeta)+\frac1{2\pi}\int_{\zeta\in B_{R'}} \log \frac{R'}{|R'^2-z\bar{\zeta}|}\Delta \psi(\zeta) + h(z)\\
   & = :& u(z)+v(z)+h(z)
  \end{eqnarray*}
  where $h$ is the\/ {\it smallest}\/ harmonic majorant of $\psi$, which naturally satisfies
  $$
  \psi\le h\le 0.
  $$
    Since $h\le 0$ is harmonic on $B_{R'}$, it follows from the mean value property that for each $z\in B_{R}$
  \begin{eqnarray}\label{eq:BMO_PSH_4}
  -h(z) & = &¡¡\fint_{B(z,R/4)} (-h)\nonumber\\
   & \le & \frac1{\pi (R/4)^{2}} \int_{B_{R'}} (-\psi)\nonumber\\
   & \le & C_\alpha  \fint_{B_{3R/2}}(1+|\psi|)^\alpha.
  \end{eqnarray}
  For each ball $B\subset B_{R}$ and $z\in B$, we have
  \begin{eqnarray*}
  2\pi [u(z)-u_B] & = & \int_{\zeta\in B_{R'}}\log |z-\zeta|\, \Delta\psi(\zeta)-\frac1{|B|}\int_{w\in B} \int_{\zeta\in B_{R'}} \log |w-\zeta|\,\Delta\psi(\zeta)\\
   & = & \int_{\zeta\in B_{R'}}[\log |z-\zeta|-(\log |\cdot-\zeta|)_B] \Delta\psi(\zeta).
  \end{eqnarray*}
  As the BMO norm on ${\mathbb C}^n$ is invariant under translations, it follows from Fubini's theorem that
   \begin{eqnarray}\label{eq:BMO_PSH_5}
  \fint_B |u-u_B| & \le & \frac1{2\pi}\|\log |z|\,\|_{{\rm BMO}({\mathbb C}^n)}  \int_{B_{R'}} \Delta\psi\nonumber\\
   & \le & C_\alpha  \fint_{B_{3R/2}}(1+|\psi|)^\alpha.
  \end{eqnarray}
  Analogously, as
  $$
  \log \frac{R'}{|R'^2-z\bar{\zeta}|}=\log \frac{R'}{|\zeta|}-\log \left|z-R'^2/\bar{\zeta}\right|
  $$
  for $\zeta\neq 0$, it follows that the BMO norms (in $z$) of $ \log \frac{R'}{|R'^2-z\bar{\zeta}|}$ and $-\log |z|$ coincide, while for $\zeta=0$, $ \log \frac{R'}{|R'^2-z\bar{\zeta}|}\equiv \log 1/R'$, so that its BMO norm is zero. Thus
   \begin{equation}\label{eq:BMO_PSH_3}
   \fint_B |v-v_B|
  \le C_\alpha  \fint_{B_{3R/2}}(1+|\psi|)^\alpha.
  \end{equation}

  Clearly, (\ref{eq:BMO_PSH_4})-(\ref{eq:BMO_PSH_3}) imply (\ref{eq:BMO_PSH_1}).
  \end{proof}

  For each $a\in {\mathbb C}^n$ and each ${\rm r}=(r_1,\cdots,r_n)$ where $r_j>0$, we define the polydisc
  $$
  P(a,{\rm r})=\left\{z\in {\mathbb C}^n:  |z_j-a_j|<r_j,\,1\le j\le n \right\}.
  $$
  Set $P(a,r)=P(a,(r,\cdots,r))$. Then we have

  \begin{lemma}\label{lm:BMO_PSH_2}
   Let $\alpha\ge 1$.  If $\psi\in PSH^-({P(0,2{\rm r})})$, then
         $$
   \fint_{P(0,{\rm r})}|\psi|^\alpha \le C_{n,\alpha}  \fint_{|z_k|<r_k} |\psi(0,\cdots,0,z_k,0,\cdots,0)|^\alpha
   $$
  for all $1\le k\le n$, where $C_{n,\alpha}>0$ depends only on $n,\alpha$.
  \end{lemma}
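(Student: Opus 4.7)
The plan is to reduce the $n$-dimensional statement to an iterated application of a one-variable $L^\alpha$--point bound for negative subharmonic functions. The iteration itself is clean: for fixed $(z_1,\dots,z_{n-1})$ with $|z_j|<r_j$, the restriction $z_n\mapsto\psi(z_1,\dots,z_n)$ is subharmonic and $\le 0$ on $\{|z_n|<2r_n\}$, so once I establish
\[
\fint_{|z|<\rho}|u|^\alpha\,dA\le C_\alpha\,|u(0)|^\alpha\quad\text{for every subharmonic } u\le 0\text{ on }\{|z|<2\rho\},
\]
I may apply it first in the variable $z_n$, integrate via Fubini over the remaining variables, then apply it again in $z_{n-1}$, and so on, successively replacing $z_n, z_{n-1},\dots,z_2$ by $0$. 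After $n-1$ steps this yields the desired inequality with $C_{n,\alpha}=C_\alpha^{n-1}$. By symmetry I may take $k=1$ throughout.

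For the one-dimensional bound I plan to use the Riesz decomposition $u=G\mu+h$ on $\{|z|<2\rho\}$, where $\mu=(2\pi)^{-1}\Delta u\ge 0$, $G\mu\le 0$ is the Green potential, and $h\le 0$ is the least harmonic majorant. Then $|u(0)|=|G\mu(0)|+|h(0)|$ and $|u|^\alpha\le 2^{\alpha-1}(|G\mu|^\alpha+|h|^\alpha)$, so the two pieces can be handled separately. Harnack's inequality applied to the non-negative harmonic function $-h$ on $\{|z|<2\rho\}$ gives $\sup_{|z|<\rho}|h|\le 3|h(0)|\le 3|u(0)|$, hence $\fint_{|z|<\rho}|h|^\alpha\le 3^\alpha|u(0)|^\alpha$.

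For the Green potential I use the explicit formula $g(z,w)=\log\bigl(2\rho|z-w|/|4\rho^2-z\bar w|\bigr)$ to verify by direct computation the pointwise-averaged bound
\[
\fint_{|z|<\rho}|g(z,w)|^\alpha\,dA(z)\le C_\alpha\,|g(0,w)|^\alpha,\qquad w\in\{|w|<2\rho\},
\]
uniformly in $w$. Granting this, I introduce the probability measure $d\nu(w)=|g(0,w)|\,d\mu(w)/|G\mu(0)|$ and apply Jensen's inequality to $t\mapsto t^\alpha$ (convex for $\alpha\ge 1$):
\[
|G\mu(z)|^\alpha=|G\mu(0)|^\alpha\!\left(\int\tfrac{|g(z,w)|}{|g(0,w)|}\,d\nu(w)\right)^{\!\alpha}\le |G\mu(0)|^\alpha\!\int\!\left(\tfrac{|g(z,w)|}{|g(0,w)|}\right)^{\!\alpha}\!d\nu(w).
\]
Integrating in $z$ and invoking the uniform-in-$w$ bound then yields $\fint_{|z|<\rho}|G\mu|^\alpha\le C_\alpha|G\mu(0)|^\alpha$, which closes the one-dimensional step.

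The main technical point will be the uniform-in-$w$ estimate on the Green kernel. The case analysis splits naturally into three regimes: $w$ close to $0$, where $|g(0,w)|=\log(2\rho/|w|)\to\infty$ and the inequality is slack; $w$ in the bulk with the log singularity at $z=w$ possibly lying inside $\{|z|<\rho\}$, handled by an explicit polar-coordinates computation; and, most delicately, $w$ close to the boundary circle $|w|=2\rho$, where both sides vanish and one must track constants carefully to see that the ratio $|g(z,w)|/|g(0,w)|$ stays of order $O(1)$ uniformly in $z\in\{|z|<\rho\}$ (both numerator and denominator being linear in $2\rho-|w|$ up to universal constants). Once this uniform estimate is in hand, the remainder of the argument is routine potential theory and Fubini.
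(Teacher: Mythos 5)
Your iteration scheme (apply a one-variable $L^\alpha$-at-a-point bound for negative subharmonic functions in $z_n$, Fubini, repeat in $z_{n-1}$, and so on) is exactly the paper's argument; the only difference is that the paper simply cites the one-dimensional inequality $\fint_{|z|<3/4}|u|^\alpha\le C_\alpha|u(0)|^\alpha$ from H\"ormander's \emph{Notions of Convexity} (p.\ 230), whereas you supply a self-contained proof of it via Riesz decomposition, Harnack on the harmonic part, and a Jensen argument with the normalized measure $|g(0,w)|\,d\mu(w)/|G\mu(0)|$ on the Green potential. Your Jensen reduction to the uniform kernel estimate $\fint_{|z|<\rho}|g(z,w)|^\alpha\le C_\alpha|g(0,w)|^\alpha$ is sound (using the identity $|1-z\bar w|^2/|z-w|^2 = 1+(1-|z|^2)(1-|w|^2)/|z-w|^2$, which shows numerator and denominator are both comparable to $1-|w|$ near the boundary), so the extra work is correct and merely replaces a citation with a proof.
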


  \begin{proof}
 It suffices to consider the case $k=1$. The Riesz decomposition theorem implies that if $u<0$  is a subharmonic function in a neighborhood of the unit closed disc in ${\mathbb C}$ then
  \begin{equation}\label{eq:Hormander}
  \int_{|z|<3/4} |u|^\alpha \le C_\alpha |u(0)|^\alpha
  \end{equation}
  where $C_\alpha$ depends only on $\alpha$  (see \cite{HormanderConvexity}, p.\,230). In the case of $n$ complex variables we consider a negative psh function $u$ in a neighborhood of the unit closed polydisc in ${\mathbb C}^n$. Then we have
  \begin{eqnarray*}
   \int_{P(0,3/4)} |u|^\alpha & \le & C_\alpha \int_{|z_1|<3/4}\cdots\int_{|z_{n-1}|<3/4} |u(z_1,\cdots,z_{n-1},0)|^\alpha\\
   & \le & \cdots \le C_\alpha^{n-1} \int_{|z_1|<3/4} |u(z_1,0')|^\alpha,
  \end{eqnarray*}
  so that
  $$
   \fint_{P(0,3/4)} |u|^\alpha \le C_{n,\alpha} \fint_{|z_1|<3/4} |u(z_1,0')|^\alpha.
  $$
  It suffices to apply the above inequality with $u(z)=\psi(4r_1 z_1/3,\cdots, 4 r_n z_n/3)$.
  \end{proof}

  \begin{lemma}\label{lm:BMO_PSH_3}
  Let $\alpha>1$. If $\psi\in PSH^-({P(0,2{\rm  R})})$, then
  \begin{equation}\label{eq:BMO_PSH_6}
  \fint_P |\psi-\psi_P|\le C_{n,\alpha}   \sum_{k=1}^n \fint_{|z_k|< 3R_k/2}\left(1+|\psi(0,\cdots,0,z_k,0,\cdots,0)|\right)^\alpha
  \end{equation}
  for any polydisc $P=P(0,{\rm r})$  with ${\rm r}\le {\rm R}$, i.e. $r_k\le R_k$ for all $k$.
  \end{lemma}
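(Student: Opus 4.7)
The plan is to decompose the oscillation over $P$ into contributions from each coordinate direction. For $0\le k\le n$ let
$$
\psi_k(z_{k+1},\dots,z_n):=\fint_{D_1\times\cdots\times D_k}\psi(w_1,\dots,w_k,z_{k+1},\dots,z_n)\,dA(w_1)\cdots dA(w_k),
$$
where $D_j:=\{|z_j|<r_j\}$, so $\psi_0=\psi$ and $\psi_n=\psi_P$. Telescoping and the triangle inequality give $\fint_P|\psi-\psi_P|\le\sum_{k=1}^n\fint_P|\psi_{k-1}-\psi_k|$, and it suffices to dominate each summand by the $k$-th term on the right side of (\ref{eq:BMO_PSH_6}).

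For fixed $(z_{k+1},\dots,z_n)\in D_{k+1}\times\cdots\times D_n$ the slice $w_k\mapsto\psi_{k-1}(w_k,z_{k+1},\dots,z_n)$ is negative and subharmonic on $\{|w_k|<2R_k\}$ (subharmonicity survives partial averaging) and has mean $\psi_k$ on $D_k$. Applying the one-dimensional BMO estimate of Lemma \ref{lm:BMO_PSH_1} with $R=R_k$ and $B=D_k\subset\{|w_k|<R_k\}$ yields
$$
\fint_{D_k}|\psi_{k-1}-\psi_k|\,dA(w_k)\le C_\alpha\fint_{\{|w_k|<3R_k/2\}}(1+|\psi_{k-1}|)^\alpha\,dA(w_k).
$$
Since $\psi\le 0$ one has $|\psi_{k-1}|=\fint|\psi|$, so Jensen's inequality for $t\mapsto(1+t)^\alpha$ dominates $(1+|\psi_{k-1}|)^\alpha$ by the $D_1\times\cdots\times D_{k-1}$-average of $(1+|\psi|)^\alpha$. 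Integrating in $(z_{k+1},\dots,z_n)$ and combining the iterated averages give
$$
\fint_P|\psi_{k-1}-\psi_k|\le C_\alpha\fint_{S_k}(1+|\psi|)^\alpha,
$$
where $S_k:=D_1\times\cdots\times D_{k-1}\times\{|z_k|<3R_k/2\}\times D_{k+1}\times\cdots\times D_n$.

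The final task is to collapse the $n$-dimensional average over $S_k$ onto a one-dimensional integral along the $z_k$-axis. Lemma \ref{lm:BMO_PSH_2} is not directly usable, because the $k$-th factor of $S_k$ has radius $3R_k/2$ and would require plurisubharmonicity on a polydisc of radius $3R_k$ in that direction, outrunning the hypothesis $\psi\in PSH^-(P(0,2{\rm R}))$. Instead I will iterate the one-variable Riesz estimate (\ref{eq:Hormander}) only in the $n-1$ transverse directions. For each $j\neq k$ and for any fixed values of the remaining coordinates (with $|z_k|<3R_k/2\le 2R_k$), the function $z_j\mapsto\psi(z)-1$ is negative and subharmonic on $\{|z_j|<2R_j\}\supset\{|z_j|\le 4r_j/3\}$ since $r_j\le R_j$, so the rescaled form of (\ref{eq:Hormander}) gives $\fint_{D_j}|\psi-1|^\alpha dA(z_j)\le C_\alpha|\psi(\dots,z_j=0,\dots)-1|^\alpha$. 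Iterating over all $j\neq k$ collapses the inner $(n-1)$-dimensional mean onto the $z_k$-axis, after which the outer integration over $\{|z_k|<3R_k/2\}$ together with the identity $1+|\psi|=|\psi-1|$ yields the desired bound; summing over $k$ produces (\ref{eq:BMO_PSH_6}). The main obstacle is precisely the radius mismatch just noted; we bypass Lemma \ref{lm:BMO_PSH_2} by performing the Riesz reduction coordinate by coordinate in the $n-1$ transverse directions only, keeping the $z_k$-integration over $\{|z_k|<3R_k/2\}$ intact.
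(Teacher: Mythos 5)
Your proof is correct and follows essentially the same scheme as the paper: a telescoping decomposition of $\psi-\psi_P$ by successive one-variable averages, the one-dimensional BMO estimate of Lemma~\ref{lm:BMO_PSH_1} applied in the $z_k$-direction, and an iterated application of the one-variable Riesz estimate~(\ref{eq:Hormander}) in the transverse directions to collapse the remaining averages onto the $z_k$-axis. The minor variation — applying Lemma~\ref{lm:BMO_PSH_1} to the partially averaged function $\psi_{k-1}$ and then invoking Jensen's inequality, rather than applying it slice-by-slice and averaging afterwards — is legitimate (partial averaging preserves negativity and subharmonicity in the remaining variable) and lands on the same quantity $\fint_{S_k}(1+|\psi|)^\alpha$.

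Your remark about the final step is well taken. The paper's phrase \emph{``in view of Lemma~\ref{lm:BMO_PSH_2}''} is a little loose: Lemma~\ref{lm:BMO_PSH_2} as stated reduces a full $n$-dimensional polydisc average to a one-dimensional integral, and invoking it verbatim on $S_k$ would require $\psi$ to be psh on a polydisc with $k$-th radius $3R_k$, exceeding the hypothesis $2R_k$. What the paper's proof of Lemma~\ref{lm:BMO_PSH_2} actually shows — and what is intended here — is the iterated one-variable estimate~(\ref{eq:Hormander}), applied for each fixed $z_k$ with $|z_k|<3R_k/2<2R_k$ to the transverse coordinates $z_j$, $j\ne k$, one at a time; since $4r_j/3\le 4R_j/3<2R_j$, each such application is justified. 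You identified this precisely and made the iteration explicit, which removes the imprecision; the content is the same.
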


  \begin{proof}
  We write
  $
  P=\prod_{j=1}^n B^j
  $
  where $B^j=\{z_j:|z_j|<r_j\}$. For each $z\in P$ we have
  \begin{eqnarray*}
  \psi(z)-\psi_P & = & \psi(z_1,z_2,\cdots,z_n)-\psi(\cdot,z_2,\cdots,z_n)_{B^1}+\cdots\\
  && + \psi(\cdots,z_{k},\cdots,z_n)_{B^1\cdots B^{k-1}}- \psi(\cdots,z_{k+1},\cdots,z_n)_{B^1\cdots B^{k}}+\cdots\\
  && +  \psi(\cdots,z_n)_{B^1\cdots B^{n-1}}- \psi_{B^1\cdots B^{n}}
  \end{eqnarray*}
  where
 $$
   \psi(\cdots,z_{k},\cdots,z_n)_{B^1\cdots B^{k-1}}
   = \fint_{\zeta_1\in B^1}\cdots \fint_{\zeta_{k-1}\in B^{k-1}}\psi(\zeta_1,\cdots,\zeta_{k-1},z_k\cdots,z_n).
 $$
  Since
  \begin{eqnarray*}
  && \left|\psi(\cdots,z_{k},\cdots,z_n)_{B^1\cdots B^{k-1}}- \psi(\cdots,z_{k+1},\cdots,z_n)_{B^1\cdots B^{k}}\right|\\
  & \le & \left|\psi(\cdots,z_{k},\cdots,z_n)- \psi(\cdots,z_{k+1},\cdots,z_n)_{B^{k}}\right|_{B^1\cdots B^{k-1}},
  \end{eqnarray*}
  it follows that
   \begin{eqnarray*}
  &&  \fint_{z_k\in B^k} \left|\psi(\cdots,z_{k},\cdots,z_n)_{B^1\cdots B^{k-1}}- \psi(\cdots,z_{k+1},\cdots,z_n)_{B^1\cdots B^{k}}\right|\\
  & \le & \left|  \fint_{z_k\in B^k} \left|\psi(\cdots,z_{k},\cdots,z_n)- \psi(\cdots,z_{k+1},\cdots,z_n)_{B^{k}}\right| \right|_{B^1\cdots B^{k-1}}\\
  & \le &  C_\alpha   \left[\fint_{|z_k|<3R_k/2} \left(1+|\psi(\cdots,z_{k},\cdots,z_n)|\right)^\alpha \right]_{B^1\cdots B^{k-1}}
    \end{eqnarray*}
    in view of Lemma \ref{lm:BMO_PSH_1}.
    Thus
    \begin{eqnarray*}
     \fint_P \left|\psi-\psi_P\right|
     & \le &  C_\alpha \sum_{k=1}^n
     \left[\fint_{|z_k|<3R_k/2} \left(1-\psi(\cdots,z_{k},\cdots)\right)^\alpha \right]_{B^1\cdots B^{k-1}B^{k+1}\cdots B^n}\\
     & \le &  C_{n,\alpha}    \sum_{k=1}^n \fint_{|z_k|<3R_k/2}\left(1-\psi(0,\cdots,0,z_k,0,\cdots,0)\right)^\alpha
    \end{eqnarray*}
  in view of Lemma \ref{lm:BMO_PSH_2}, for $\psi-1$ is a negative psh function.
  \end{proof}

 \begin{lemma}\label{lm:BMO_PSH_5}
 If  $\psi\in PSH^-(B_R)$, then there exists  $C_n>0$ depending only on $n$,  such that there are complex lines $L_1,\cdots,L_n$ through the origin,  which are orthogonal each other and satisfy
  $
  L_j\cap S\neq \emptyset
  $
  for all $j$, where
   $$
  S=\left\{z\in B_{R/3^n}:\psi(z)>C_n\psi_{B_{R/3}}\right\}.
  $$
  \end{lemma}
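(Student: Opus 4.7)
\emph{Plan of proof.} We aim to show first that the set $S$ occupies almost all of $B_{R/3^n}$ for a suitable choice of $C_n$, and then to use a probabilistic argument on the unitary group $U(n)$, combined with a Fubini estimate in polar coordinates, to exhibit an orthonormal frame whose $n$ complex lines each meet $S$.

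\emph{Step 1 (large measure of $S$).} We may assume $\psi_{B_{R/3}}<0$; otherwise $\psi\equiv 0$ by upper semicontinuity and any orthonormal frame works trivially. Since $|\psi|=-\psi\ge 0$, the trivial monotonicity $\int_{B_{R/3^n}}|\psi|\le\int_{B_{R/3}}|\psi|=|B_{R/3}|\,|\psi_{B_{R/3}}|$ together with Chebyshev's inequality on $B_{R/3^n}$ gives
$$\bigl|\{z\in B_{R/3^n}:|\psi(z)|\ge C_n|\psi_{B_{R/3}}|\}\bigr|\le\frac{|B_{R/3}|}{C_n}=\frac{3^{2n(n-1)}}{C_n}\,|B_{R/3^n}|.$$
Taking $C_n=(n+1)\cdot 3^{2n(n-1)}$ makes the exceptional set $A:=B_{R/3^n}\setminus S$ satisfy $|A|<|B_{R/3^n}|/n$.

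\emph{Step 2 (Fubini on rays).} For $u\in S^{2n-1}$ with $\mathbb{C}u\cap B_{R/3^n}\subset A$, the real ray $\{\rho u:0<\rho<R/3^n\}$ lies in $A$ as well. Since $(u,\rho)\mapsto\rho u$ is injective, for any measurable $E\subset S^{2n-1}$ the ``sector'' $\bigcup_{u\in E}\{\rho u:0<\rho<R/3^n\}$ has $2n$-dimensional Lebesgue measure $|E|\,(R/3^n)^{2n}/(2n)$. Combining this with $|B_{R/3^n}|=(R/3^n)^{2n}|S^{2n-1}|/(2n)$ and taking $E$ to be the set of $u$'s whose rays lie in $A$ yields
$$\frac{|E|}{|S^{2n-1}|}\le\frac{|A|}{|B_{R/3^n}|}<\frac{1}{n}.$$
Hence a uniform random $u\in S^{2n-1}$ satisfies $\mathbb{C}u\cap S\neq\emptyset$ with probability exceeding $1-1/n$.

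\emph{Step 3 (union bound on $U(n)$).} Let $U$ be Haar-distributed on $U(n)$. By transitivity of $U(n)$ on $S^{2n-1}$, each column $Ue_k$ is uniform on $S^{2n-1}$, so by Step 2, $\Pr[\mathbb{C}(Ue_k)\cap S=\emptyset]<1/n$ for each $k=1,\dots,n$. A union bound gives $\Pr[\exists k:\mathbb{C}(Ue_k)\cap S=\emptyset]<1$, so for some $U\in U(n)$ the $n$ pairwise orthogonal lines $L_k:=\mathbb{C}(Ue_k)$ all meet $S$, completing the proof. The conceptually delicate point is bridging the measure-theoretic control on $S$ to the existence of a geometrically structured orthonormal frame; this is resolved cleanly by the elementary observation that a complex line entirely avoiding $S$ contains a real ray entirely avoiding $S$, together with the polar Fubini identity and $U(n)$-invariance of Haar measure. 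The constant $C_n$ is then just tuned so that the union bound succeeds.
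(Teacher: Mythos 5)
Your argument is correct and takes a genuinely different route from the paper's. Both proofs begin with the same Chebyshev step, but the paper then works inside $B_{R/3}$, constructs an explicit orthogonalizing map $F(z)=(-(|z_2|^2+\cdots+|z_n|^2)/\bar z_1,\,z_2,\ldots,z_n)$, bounds its real Jacobian so that $S_m$ and a scaled copy of $F(S_m')$ must overlap (producing $L_1$ together with a point of $S_m$ on $L_1^\bot$), then applies the mean-value inequality on the slice $L_1^\bot$ and iterates this construction $n-1$ times; the $3^n$ records the radius lost at each iteration. You instead control $|A|=|B_{R/3^n}\setminus S|$ directly, use polar Fubini to show that a complex line missing $S$ forces a full radial ray into $A$, which gives $\sigma\{u\in S^{2n-1}:\mathbb{C}u\cap S=\emptyset\}<|S^{2n-1}|/n$, and then produce all $n$ orthogonal lines simultaneously by a union bound over Haar measure on $U(n)$. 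This dispenses with the Jacobian computation and the inductive dimension reduction entirely and yields the explicit constant $C_n=(n+1)3^{2n(n-1)}$. Two small points worth tightening: measurability of the set of bad directions is cleanest via the measurable function $h(u)=\int_0^{R/3^n}\chi_A(\rho u)\rho^{2n-1}\,d\rho$, since a bad $u$ lies in $\{h=(R/3^n)^{2n}/(2n)\}$ whose $\sigma$-measure is bounded by integrating $h$ over $S^{2n-1}$; and in the degenerate case $\psi_{B_{R/3}}=0$ one has $\psi\equiv 0$ and hence $S=\emptyset$, so the lemma (with its strict inequality defining $S$) is vacuous there rather than ``trivially true''---an edge case the paper's own proof also silently sets aside.
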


  \begin{proof}
  Set
  $$
  S_m=\left\{z\in B_{R/3}:\psi(z)>m\psi_{B_{R/3}}\right\},\ \ \ S_m^c=B_{R/3}-S_m.
    $$
   By Chebychev's inequality
  $$
  \int_{B_{R/3}} |\psi|\ge -m \psi_{B_{R/3}} |S_m^c|,
  $$
  we establish
  \begin{eqnarray}\label{eq:VolumeEstimate}
  |S_m|  \ge  |B_{R/3}|- |{B_{R/3}} |/m
   >  |B_{R/3}|/2
  \end{eqnarray}
  provided
  $
  m>2.
  $
  We  choose a universal constant $0<c_1< 1/3$ such that
  $$
  \left|\{z\in B_{R/3}:|z_1|<c_1 R\}\right|+\left|\{z\in B_{R/3}:|z_2|<c_1 R\}\right|<|B_{R/3}|/4.
  $$
  Set
  $$
  S_m'=S_m\cap \{z:\min\{|z_1|,|z_2|\}>c_1 R\}.
  $$
  Then we have
  $$
  |S_m'|\ge |S_m|-|B_{R/3}|/4> |S_m|/2
  $$
  in view of (\ref{eq:VolumeEstimate}).
  We define a diffeomorphism $F$ on $S_m'$ as follows: $w_j=z_j$ for $j>1$ and
  $$
  w_1=-(|z_2|^2+\cdots+|z_n|^2)/\bar{z}_1.
  $$
  Clearly, the vector $F(z)$ is orthogonal to $z$ in ${\mathbb C}^n$ and satisfies
  $$
  |F(z)|\le {|z|^2}/{|z_1|}< (9c_1)^{-1}R,
  $$
  i.e. $(3c_1)\cdot F(z)\in B_{R/3}$.
    Since the real Jacobian $J_{\mathbb R}(F)$ of $F$ satisfies
  $$
  J_{\mathbb R}(F)(z)=-(|z_2|^2+\cdots+|z_n|^2)^2/|z_1|^4
  $$
  for $z\in S_m'$, it follows that $|J_{\mathbb R}(F)(z)|\ge (3c_1)^4$ and
  $$
  |(3c_1)\cdot F(S_m')|\ge ( 3c_1)^{2n+4} |S_m'|> \frac12 (3c_1)^{2n+4} |S_m|.
  $$
  Thus if we choose
  $$
   m>\frac{1+\frac12 (3c_1)^{2n+4}}{\frac12 (3c_1)^{2n+4}}
     $$
  so that
  $$
  |S_m|>\left[1+\frac12 (3c_1)^{2n+4}\right]^{-1} |B_{R/3}|
  $$
  in view of (\ref{eq:VolumeEstimate}),
   then
   $$
   S_m\cap \left[(3c_1)\cdot F(S_m')\right]\neq \emptyset.
   $$
    In other words, there exists a complex line $L_1$ such that both $L_1$ and its orthogonal complement $L_1^\bot$ in ${\mathbb C}^n$ intersect $S_m$. Suppose $a\in S_m\cap L^\bot_1$. The mean value inequality for the psh function $\psi$ implies
  $$
  \fint_{B(a,2R/3)\cap L^\bot_1} |\psi|\le |\psi(a)|.
  $$
  Since $B_{R/3}\subset B(a,2R/3)$, we have
   $$
  \fint_{B_{R/3}\cap L^\bot_1} |\psi|\le C_n m \fint_{B_{R/3}} |\psi|
  $$
  where $C_n>0$ depends only on $n$.
  By repeating the previous argument, we obtain the remaining complex lines $L_2,\cdots,L_n$.
  \end{proof}

  \begin{proof}[Proof of Theorem \ref{th:Main}]
 Given $a\in B_R$, we have $B(a,R)\subset B_{2R}$ and
 $$
 B(a,r)\subset P(a,r)\subset B(a,{R/3}),\ \ \ r\le R/(3{n}^{1/2}).
 $$
 We assume $a=0$ for the sake of simplicity. Let $L_j$, $1\le j\le n$, be chosen as Lemma \ref{lm:BMO_PSH_5}. By a unitary transformation, we may assume that $L_j$ is the $z_j-$axis for each $j$. By Lemma \ref{lm:BMO_PSH_3} and Lemma \ref{lm:BMO_PSH_4}, we see that
  $$
  \fint_{B(0,r)} \left|\psi-\psi_{B(0,r)}\right|\le C_{n,\alpha}    \sum_{k=1}^n \fint_{|z_k|<R/2}\left(1+|\psi(0,\cdots,0,z_k,0,\cdots,0)|\right)^\alpha.
  $$
  Let $b^{(k)}=(0,\cdots,0,b_k,0,\cdots,0)\in L_k\cap S$. It follows from (\ref{eq:Hormander}) and Lemma \ref{lm:BMO_PSH_5} that
  $$
  \fint_{|z_k-b_k|<R/2+R/3^n}|\psi(0,\cdots,0,z_k,0,\cdots,0)|^\alpha\le C_{n,\alpha}  |\psi(b^{(k)})|^\alpha\le C_{n,\alpha} |\psi_{B(0,{R/3})}|^\alpha.
  $$
 As
 $$
\{z_k:|z_k|<R/2\}\subset \{z_k:|z_k-b_k|<R/2+R/3^n\},
 $$
 we have
    $$
   \fint_{|z_k|<R/2}\left|\psi(0,\cdots,0,z_k,0,\cdots,0)\right|^\alpha\le C_{n,\alpha} |\psi_{B(0,{R/3})}|^\alpha\le C_{n,\alpha} |\psi_{B_{2R}}|^\alpha\le C_{n,\alpha} |\psi_{B_{R}}|^\alpha
     $$
    in view of Lemma \ref{lm:BMO_PSH_4}  and Lemma \ref{lm:Doubling}. Thus
     $$
      \fint_{B(0,r)} \left|\psi-\psi_{B(0,r)}\right|\le C_{n,\alpha} \left(1+|\psi_{B_{R}}|\right)^\alpha.
                $$
  On the other hand, for each ball $B(a,r)\subset B_R$ with $r>R/(3{n}^{1/2})$, we naturally have
  \begin{eqnarray*}
   \fint_{B(a,r)} \left|\psi-\psi_{B(a,r)}\right| & \le & 2|\psi_{B(a,r)}|\le 2 \left(1+|\psi_{B(a,r)}|\right)^\alpha\\
   & \le & C_{n,\alpha} \left(1+|\psi_{B_{2R}}|\right)^\alpha \\
   & \le & C_{n,\alpha} \left(1+|\psi_{B_{R}}|\right)^\alpha
        \end{eqnarray*}
        in view of Lemma \ref{lm:BMO_PSH_4}.
        Thus we have (\ref{eq:Main}).
    \end{proof}

    \section{Proof of Theorem \ref{th:JohnNirenberg}}

    Let us recall some basic facts from the theory of weights, by following Stein \cite{SteinHarmonicBook}. A local integrable function $\omega\ge 0$ on a domain $\Omega$ in ${\mathbb R}^n$ is said to satisfy the $A_p$ condition if
    \begin{equation}\label{eq:A_pCondition}
     \left[\fint_B \omega \right]\cdot \left[\fint_B \omega^{-1/(p-1)}\right]^{p-1}\le A<\infty
    \end{equation}
    for all balls $B\subset\subset \Omega$.  The smallest constant $A$ for which (\ref{eq:A_pCondition}) holds is called the $A_p$ constant of $\omega$, which is denoted by $A_p(\omega)$. It is known that $\omega\in A_p$ if and only if
    \begin{equation}\label{eq:A_pEquivalent}
     (f_B)^p\le C \left[\int_B f^p \omega\right] \cdot \left[\int_B \omega\right]^{-1}
    \end{equation}
    for all nonnegative $f\in L^1_{\rm loc}(\Omega)$ and all balls $B\subset\subset \Omega$; moreover the smallest $C$ for which (\ref{eq:A_pEquivalent}) is valid equals $A_p(\omega)$ (see \cite{SteinHarmonicBook}, p.\,195). Let $E$ be a measurable set in $B$ and $\chi_E$ the characteristic function of $E$. Applying (\ref{eq:A_pEquivalent}) with $f=\chi_E$ we establish
    \begin{equation}\label{eq:A_pVariation}
     \int_B \omega \le A_p(\omega)\left(|B|/|E|\right)^p \int_E \omega.
    \end{equation}
    In particular, $\omega$ satisfies a doubling property
    \begin{equation}\label{eq:A_pDoubling}
    \int_{B}\omega \le 2^{np}A_p(\omega)\int_{\frac12 B} \omega.
    \end{equation}

       Let $\psi\in {\rm BMO}(\Omega)$ and $u:=c_n \psi/\|\psi\|_{{\rm BMO}(\Omega)}$, where $c_n$ is the constant in  (\ref{eq:JohnNirenberger}). Then we have
    $$
    \fint_B e^{u-u_B}\le C_n,\ \ \ \fint_B e^{u_B-u}\le C_n,
    $$
    so that
    \begin{equation}\label{eq:A_2}
    \left[\fint_B e^u\right]\cdot  \left[\fint_B e^{-u}\right]=\left[\fint_B e^{u-u_B}\right]\cdot \left[\fint_B e^{u_B-u}\right]\le C_n^2,
    \end{equation}
    i.e. $e^u,\,e^{-u}\in A_2$. Applying (\ref{eq:A_pVariation}) with $\omega=e^u$  we establish
     \begin{equation}\label{eq:A_2Variation}
     \int_B e^u \le C_n \left(|B|/|E|\right)^2 \int_E e^u.
    \end{equation}

     Now we can prove the following inequality mentioned in \S\,1.

     \begin{theorem}\label{th:Bernstein}
   Let $B_R=\{z\in {\mathbb C}^n:|z|<R\}$ and $\alpha>1$. If $\psi\in PSH^-(B_{2R})$, then for each ball $B\subset B_R$ and each measurable set $E\subset B$ one has
   \begin{equation}\label{eq:Bernstein}
   \sup_B \psi\le \sup_E \psi+ C_{n,\alpha} \left( 1+|\psi_{B_R}|\right)^\alpha \left[1+\log \left({|B|}/{|E|}\right)\right]
   \end{equation}
   where $C_{n,\alpha}>0$ depends only on $n,\alpha$.
   \end{theorem}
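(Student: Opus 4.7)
The plan is to combine Theorem \ref{th:Main} with the $A_2$-machinery summarized in (\ref{eq:A_pCondition})--(\ref{eq:A_2Variation}) and with the mean value inequality for plurisubharmonic functions. The idea is to convert the reverse-H\"older-type variation (\ref{eq:A_2Variation}) into the sup--sup statement (\ref{eq:Bernstein}) by exploiting that $e^{\beta\psi}$ is itself psh (as an increasing convex function of the psh function $\psi$), so the mean value inequality controls its supremum by an average, while the $A_2$ inequality compares that average to one over the subset $E$.

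Concretely, I would first set $\Lambda := \|\psi\|_{{\rm BMO}(B_R)}$; by Theorem \ref{th:Main}, $\Lambda \le C_{n,\alpha}(1+|\psi_{B_R}|)^\alpha$. With $\beta := c_n/\Lambda$, where $c_n$ is the constant from (\ref{eq:JohnNirenberger}), the weight $e^{\beta\psi}$ belongs to $A_2(B_R)$ with constant bounded by $C_n^2$ by virtue of (\ref{eq:A_2}). Given $B = B(a_0, r_0) \subset B_R$ and $E \subset B$, I would apply (\ref{eq:A_2Variation}) to $e^{\beta\psi}$ on the dilated ball $B^+ := B(a_0, 2r_0)$ with subset $E \subset B^+$; combining this with the trivial bound $\int_E e^{\beta\psi} \le |E| e^{\beta \sup_E \psi}$ yields
\[ \int_{B^+} e^{\beta\psi} \le C_n \cdot 2^{4n}\,\frac{|B|^2}{|E|}\,e^{\beta \sup_E \psi}. \]
For the lower bound on the left, since $e^{\beta\psi}$ is psh on $B_{2R}$, the mean value inequality applied at each $a \in B$ on the ball $B(a, r_0) \subset B^+$ (which has the same volume as $B$) gives $e^{\beta\psi(a)} \le |B|^{-1}\int_{B^+} e^{\beta\psi}$. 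Taking the supremum over $a \in B$ and combining with the previous display produces
\[ e^{\beta(\sup_B \psi - \sup_E \psi)} \le C_n\cdot 2^{4n}\,\frac{|B|}{|E|}. \]
Taking logarithms, dividing by $\beta = c_n/\Lambda$, and inserting the bound on $\Lambda$ gives (\ref{eq:Bernstein}).

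The main obstacle is purely geometric: the $A_2$ inequality is available only on balls compactly contained in $B_R$, while the mean value inequality for $e^{\beta\psi}$ operates in the larger set $B_{2R}$ where $\psi$ is assumed psh, and the choice $B^+ = 2B$ must reconcile the two. The argument above works directly when $2B \subset\subset B_R$; the complementary boundary case, when $B$ sits too close to $\partial B_R$ for this to hold, is dealt with by a standard approximation, reducing to the generic case via slightly shrunken concentric balls and passage to the limit with the help of the upper semicontinuity of $\psi$. The constants throughout depend only on $n$, $\alpha$, and $|\psi_{B_R}|$, as required.
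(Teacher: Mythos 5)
Your proof is correct and follows essentially the same route as the paper: bound $\|\psi\|_{{\rm BMO}(B_R)}$ via Theorem \ref{th:Main}, deduce the $A_2$-property of $e^{\beta\psi}$ from John--Nirenberg, and combine the $A_2$-variation inequality (\ref{eq:A_pVariation}) with the mean value inequality for the psh function $e^{\beta\psi}$. The only cosmetic difference is that you apply (\ref{eq:A_pVariation}) directly on $2B$, whereas the paper applies it on $B$ itself and then uses the doubling property (\ref{eq:A_pDoubling}) to pass from $B$ to $2B$ before invoking the mean value inequality at the point of $\overline{B}$ where $\sup_B\psi$ is attained --- and the boundary issue $2B\not\subset B_R$ that you flag is present, and left equally unaddressed, in the paper's own proof.
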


        \begin{proof}
        Let $B$ be a ball in $B_R$ and $E$ a measurable set in $B$.  Then we have
        \begin{equation}\label{eq:Bernstein_2}
        \int_E e^{u}\le |E|\, e^{\sup_E u}
        \end{equation}
        where $u=c_n\psi/\|\psi\|_{{\rm BMO}(B_R)}$.
        On the other hand, we choose a point $a\in \overline{B}$ such that $u(a)=\sup_B u$. Let $r$ be the radius of $B$. The doubling property (\ref{eq:A_pDoubling}) implies
        \begin{equation}\label{eq:Bernstein_3}
        \int_B e^{u}\ge C_n^{-1}\int_{2B} e^{u}\ge C_n^{-1}\int_{B(a,r)} e^{u}\ge C_n^{-1} |B(a,r)|e^{u(a)}\ge C_n^{-1}|B|\,e^{\sup_B u}
        \end{equation}
        where the third inequality follows from the mean value inequality for the psh function $e^u$. Combining  (\ref{eq:A_2Variation}), (\ref{eq:Bernstein_2}) and (\ref{eq:Bernstein_3}) yields
        \begin{equation}\label{eq:Bernstein_4}
        \sup_B u\le \sup_E u + \log \left({|B|}/{|E|}\right)+C_n.
        \end{equation}
       This inequality combined with Theorem \ref{th:Main} gives (\ref{eq:Bernstein}).
        \end{proof}

         Theorem \ref{th:Bernstein} implies a new interpretation of the Lelong number.

   \begin{corollary}\label{cor:Lelong}
   Let $\psi$ be a psh function on a domain $\Omega\subset {\mathbb C}^n$. Let $\nu(\psi,z)$ denote the Lelong number of $\psi$ at $z\in \Omega$. Let $E_r,\,0<r\ll1$, be a family of measurable sets satisfying $E_r\subset B(z,r)$ and
   $$
   \log \left(|B(z,r)|/|E_r|\right)={\rm o}(\log 1/r),\ \ \ r\rightarrow 0.
   $$
  Then we have
   \begin{equation}\label{eq:Lelong}
   \nu(\psi,z) =\lim_{r\rightarrow 0}{({\sup}_{E_r}\psi)}/{\log r}.
   \end{equation}
   \end{corollary}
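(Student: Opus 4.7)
The plan is to establish the two one-sided inequalities
$$
\liminf_{r\to 0}\frac{\sup_{E_r}\psi}{\log r}\ge\nu(\psi,z) \qquad\text{and}\qquad \limsup_{r\to 0}\frac{\sup_{E_r}\psi}{\log r}\le\nu(\psi,z),
$$
whose conjunction gives (\ref{eq:Lelong}). After a translation I may assume $z=0$. Since $\psi$ is locally bounded above, I may further subtract a finite constant so that $\psi<0$ on some ball $B_{2R}\subset\Omega$, bringing us into the setting of Theorem \ref{th:Bernstein}. This constant shift alters the numerator by $O(1)$, which is absorbed by $\log r\to-\infty$ in the limit, and of course does not affect the Lelong number.

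The lower bound is immediate: from $E_r\subset B(0,r)$ we get $\sup_{E_r}\psi\le\sup_{B(0,r)}\psi$, and dividing by $\log r<0$ reverses the inequality. Since $r\mapsto\sup_{B(0,r)}\psi$ is convex in $\log r$, the ratio $\sup_{B(0,r)}\psi/\log r$ has a limit as $r\to 0$, which is precisely $\nu(\psi,0)$. Taking $\liminf$ on the left then yields the first inequality.

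For the upper bound I apply Theorem \ref{th:Bernstein} with $B=B(0,r)$ and $E=E_r$, which gives
$$
\sup_{B(0,r)}\psi\le\sup_{E_r}\psi+C_{n,\alpha}(1+|\psi_{B_R}|)^\alpha\bigl[1+\log(|B(0,r)|/|E_r|)\bigr].
$$
The prefactor $C_{n,\alpha}(1+|\psi_{B_R}|)^\alpha$ is a finite constant independent of $r$, and by hypothesis $\log(|B(0,r)|/|E_r|)={\rm o}(\log 1/r)$, so the entire right-hand error term is ${\rm o}(|\log r|)$. Dividing through by $\log r<0$ flips the inequality and produces
$$
\frac{\sup_{E_r}\psi}{\log r}\le\frac{\sup_{B(0,r)}\psi}{\log r}+{\rm o}(1);
$$
passing to $\limsup$ then gives $\limsup(\sup_{E_r}\psi)/\log r\le\nu(\psi,0)$, as desired.

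The only non-routine ingredient is the local Bernstein inequality of Theorem \ref{th:Bernstein}, which has already been established in this section; granted that, the corollary reduces to this short limiting argument. The hypothesis $\log(|B(z,r)|/|E_r|)={\rm o}(\log 1/r)$ is calibrated precisely to annihilate the logarithmic error term produced by Theorem \ref{th:Bernstein} after division by $\log r$.
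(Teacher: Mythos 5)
Your proposal is correct and follows essentially the same route as the paper: subtract a constant to make $\psi$ negative locally, use the inclusion $E_r\subset B(z,r)$ together with $\nu(\psi,z)=\lim_{r\to 0}(\sup_{B(z,r)}\psi)/\log r$ for one inequality, and apply Theorem \ref{th:Bernstein} for the other, with the hypothesis on $|E_r|$ killing the logarithmic error term after division by $\log r$. The only difference is that you spell out the $o(1)$ bookkeeping more explicitly than the paper does.
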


          \begin{proof}
    After subtracting a constant to $\psi$ we may assume $\psi<0$ on $B(z,r_0)$ for some $r_0<d(z,\partial \Omega)$. As
    $$
    \nu(\psi,z)=\lim_{r\rightarrow 0}\, ({\sup}_{B(z,r)} \psi)/\log r,
   $$
     we have
     $$
    \nu(\psi,z)\le \liminf_{r\rightarrow 0}\, ({\sup}_{E_r} \psi)/\log r.
    $$
    On the other hand, (\ref{eq:Bernstein}) implies
       $$
   \nu(\psi,z)\ge \limsup_{r\rightarrow 0} \, ({\sup}_{E_r} \psi)/\log r.
   $$
       \end{proof}

        \begin{theorem}\label{th:JNExponent_2}
 Let $\alpha>2$ and $\gamma>1$. If $\psi\in PSH^-(\tilde{B}_{2R})$, then there exist positive constants $c_{n,\alpha,\gamma}$ depends only on $n,\alpha,\gamma$ and $C_n$ depending only on $n$ such that
 \begin{equation}\label{eq:JNexponent_2}
\fint_{\tilde{B}_r+a} e^{\varepsilon|\psi-\psi_{\tilde{B}_r+a}|} \le C_n
 \end{equation}
 for all $\tilde{B}_r+a:=\{z+a:z\in \tilde{B}_r\}\subset \tilde{B}_R$,
 where
 $$
 \varepsilon=c_{n,\alpha,\gamma} \left[1+ \fint_{\tilde{B}_{3R/2}} | \psi|^\alpha  \right]^{-\gamma/\alpha}.
 $$
  \end{theorem}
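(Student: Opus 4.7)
\medskip

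\noindent\textbf{Proof plan.}
The strategy is to establish Theorem \ref{th:JNExponent_2} as a nonisotropic analog of Theorem \ref{th:Main}, then upgrade to exponential integrability via a John--Nirenberg argument adapted to the family of translated nonisotropic balls. More precisely, I would first prove a bound of the form
$$
\sup_{\tilde B_r+a\subset \tilde B_R}\fint_{\tilde B_r+a}\bigl|\psi-\psi_{\tilde B_r+a}\bigr|\;\le\; M
:= C_{n,\alpha,\gamma}\Bigl[1+\fint_{\tilde B_{3R/2}}|\psi|^\alpha\Bigr]^{\gamma/\alpha},
$$
and then invoke a John--Nirenberg lemma adapted to the family $\{\tilde B_r+a\}$ to conclude $\varepsilon \sim 1/M$.

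\medskip

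\noindent\textbf{Stage 1 (nonisotropic BMO bound).}
The key is the product structure $\tilde B_r+a=(B_r+a_1)\times (B_{\sqrt r}+a')$ in which each factor is a genuine Euclidean ball. For $f(z'):=\fint_{B_r+a_1}\psi(\zeta_1,z')\,dA(\zeta_1)$ (psh in $z'$ since partial Lebesgue averaging preserves plurisubharmonicity, and negative since $\psi<0$), I would split
$$
\psi(z_1,z')-\psi_{\tilde B_r+a}=[\psi(z_1,z')-f(z')]+[f(z')-f_{B_{\sqrt r}+a'}].
$$
The first piece is controlled, slice-by-slice in $z'$, by the one-dimensional estimate Lemma \ref{lm:BMO_PSH_1}, yielding an $\fint(1+|\psi|)^\alpha$ over a slightly enlarged one-dimensional disc. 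The second piece is the BMO oscillation of the psh function $f$ on a Euclidean ball $B_{\sqrt r}+a'\subset\mathbb C^{n-1}$, which is handled by Theorem \ref{th:Main} applied in dimension $n-1$ and gives a bound of the form $C_\alpha(1+|\psi_{\tilde B_r+a}|)^\alpha$. Integrating the slice estimates over $z'\in B_{\sqrt r}+a'$ and applying a restriction estimate of Lemma \ref{lm:BMO_PSH_2}-type together with Jensen transfers both bounds to a control by $\fint_{\tilde B_{3R/2}}|\psi|^\alpha$; the slack $\gamma>1$ (and the hypothesis $\alpha>2$) absorbs the H\"older losses incurred by this transfer.

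\medskip

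\noindent\textbf{Stage 2 (adapted John--Nirenberg).}
The family $\mathcal F:=\{\tilde B_r+a\}$ enjoys the two structural properties needed for a Calder\'on--Zygmund / John--Nirenberg machinery: (i) \emph{Lebesgue doubling}, since $|\tilde B_{2r}|=2^{n+1}|\tilde B_r|$ and translations preserve volume, and (ii) a natural \emph{dyadic refinement}, halving $r\mapsto r/4$ so the ``children'' $\tilde B_{r/4}+b$ remain in $\mathcal F$ with bounded overlap and exhaust $\tilde B_r+a$. With these in hand, the classical proof of the John--Nirenberg inequality (as in Stein \cite{SteinHarmonicBook}) carries over verbatim: from the $L^1$ bound $M$ of Stage 1 one obtains
$$
\fint_{\tilde B_r+a} e^{c_n|\psi-\psi_{\tilde B_r+a}|/M}\;\le\;C_n,
$$
which is precisely the conclusion with $\varepsilon=c_n/M$ of the claimed form.

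\medskip

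\noindent\textbf{Principal obstacle.}
The harder part is Stage 1, and specifically the bookkeeping required to obtain the exponent $\gamma/\alpha$ rather than a cruder power of $\fint|\psi|^\alpha$. One must simultaneously (a) apply Theorem \ref{th:Main} to $f$ on a ball whose doubling is contained in $\tilde B_R$, so $f$ is psh on the enlarged ball --- this is what forces the auxiliary region in the statement to be $\tilde B_{3R/2}$ rather than $\tilde B_R$ --- and (b) pass from $(1+|\psi_{\tilde B_r+a}|)^\alpha$ to $\fint_{\tilde B_{3R/2}}|\psi|^\alpha$ uniformly in $r$ and $a$ without picking up $r$-dependent factors. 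The second requires a localized $L^\alpha$-restriction to one-dimensional slices in the spirit of Lemma \ref{lm:BMO_PSH_2}, iterated in the two coordinate groups separately so that the two different scales $r$ and $\sqrt r$ do not interact badly.
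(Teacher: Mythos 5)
The proposal takes a genuinely different route from the paper and, as sketched, Stage 1 has a gap that is not merely ``bookkeeping'' but the actual crux.

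The paper does not establish a uniform $L^1$--BMO bound $\sup_{\tilde B_r+a}\fint_{\tilde B_r+a}|\psi-\psi_{\tilde B_r+a}|\le M$ and then invoke a John--Nirenberg machinery. Instead, Lemma \ref{lm:ReverseHolder} proves a reverse H\"older inequality and a doubling property for $\omega=e^{\psi/\lambda}$ over the nonisotropic balls, by applying the Bernstein inequality of Theorem \ref{th:Bernstein} to the pulled-back function $\varphi(\zeta)=\psi(\zeta_1^2,\zeta')$; the Bernstein inequality compares \emph{suprema} over comparable sets, not averages, and this, combined with the mean value inequality for the psh function $e^{\psi/\lambda}$, gives
$\bigl[\fint_{\tilde B_r+a}\omega^2\bigr]^{1/2}\le C_n\fint_{\tilde B_r+a}\omega$ and $\int_{\tilde B_r+a}\omega\le C_n\int_{\tilde B_{r/2}+a}\omega$. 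Calder\'on's theorem then upgrades this to an $A_p$ condition on the quasi-metric family $\{\tilde B_r+a\}$, and \eqref{eq:JNexponent_2} follows by Jensen.

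Your Stage 1, however, cannot produce the bound you need, because the intermediate quantity it generates is not controlled by $\fint_{\tilde B_{3R/2}}|\psi|^\alpha$. After using Lemma \ref{lm:BMO_PSH_1} slice-by-slice for the term $\psi(z_1,z')-f(z')$ and integrating over $z'\in B_{\sqrt r}+a'$, you are left with an expression of the form
$$\fint_{z'\in B_{\sqrt r}+a'}\ \fint_{|z_1|<3R/2}\bigl(1+|\psi(z_1,z')|\bigr)^\alpha,$$
and similarly the application of Theorem \ref{th:Main} in dimension $n-1$ to $f$ produces a bound involving $\bigl(1+|f_{B_{\sqrt r}+a'}|\bigr)^\alpha$. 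Neither of these is controlled uniformly in $r$ and $a'$ by the fixed global average $\fint_{\tilde B_{3R/2}}|\psi|^\alpha$: a small-set average of $|\psi|^\alpha$ can be much larger than a large-set average, and for a psh function there is no doubling-type inequality transferring control from big sets to small ones. A concrete example: take $n=2$, $\psi(z)=\log|z_2|-C<0$, $a'=0$. Then the nonisotropic BMO oscillation of $\psi$ on $\tilde B_r$ is $O(1)$, as it should be, but both $\fint_{|z_2|<\sqrt r}\fint_{|z_1|<3R/2}(1+|\psi|)^\alpha\sim|\log\sqrt r|^\alpha$ and $|f_{B_{\sqrt r}}|^\alpha\sim|\log\sqrt r|^\alpha$ diverge as $r\to 0$, whereas $\fint_{\tilde B_{3R/2}}|\psi|^\alpha$ is a fixed finite number. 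So the chain of inequalities you propose breaks between the intermediate quantity and the target bound. This is precisely what the Bernstein-inequality route avoids: $\sup_B\psi-\sup_E\psi$ is controlled solely by a volume-ratio logarithm (with a constant already normalized by the BMO norm), which remains bounded at all scales, whereas your decomposition injects the absolute size of the local mean, which blows up. Your Stage 2 (nonisotropic John--Nirenberg) is correct as stated, but Stage 1 needs an essentially different idea such as the paper's use of the square-root substitution combined with Theorem \ref{th:Bernstein}.
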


  Let us first observe that Theorem \ref{th:JohnNirenberg} follows from Theorem \ref{th:JNExponent_2}. Let $U$ be a neighborhood of $\overline{\Omega}$ such that $\psi$ is psh on $\overline{U}$.  Let $D\in \tilde{\mathcal B}(\Omega)$. By a change of complex coordinates we may assume that $D$ is of form $\tilde{B}_r$ for some $r>0$. As $\tilde{B}_{2R}\subset U$ for $R:=d(D,\partial U)^2/4$ (assume $d(D,\partial U)\le 1$ for the sake of simplicity), we apply Theorem \ref{th:JNExponent_2} with $\psi$ replaced by $\tilde{\psi}:=\psi-\sup_U \psi$ to get
  $$
  \fint_{D} e^{\varepsilon |\psi-\psi_{D}|} \le C_n
  $$
  provided
  \begin{eqnarray*}
    \varepsilon & = & c_{n,\alpha,\gamma} \left[1+ \fint_{\tilde{B}_{3R/2}} |\tilde{ \psi}|^\alpha  \right]^{-\gamma/\alpha}\\
    & \ge & c_{n,\alpha,\gamma}\left[1+ d(D,\partial U)^{-2(n+1)} \int_{U} |\tilde{ \psi}|^\alpha  \right]^{-\gamma/\alpha}.
   \end{eqnarray*}
   This completes the proof of Theorem \ref{th:JohnNirenberg}.

   Theorem \ref{th:JNExponent_2} will be deduced from the following inequalities.

   \begin{lemma}\label{lm:ReverseHolder}
   Let $\alpha>2$ and $\gamma>1$. If $\psi\in PSH^-(\tilde{B}_{2R})$, then there exists a number
   $$
   0<\lambda\le C_{n,\alpha,\gamma}\left[1+\fint_{\tilde{B}_{3R/2}} | \psi|^\alpha\right]^{\gamma/\alpha}
   $$
   such that  for each $r\le R/2$ and $a$ with $\tilde{B}_r+a\subset \tilde{B}_R$,
   \begin{equation}\label{eq:ReverseHolder}
    \left[\fint_{\tilde{B}_r+a}  e^{2\psi/\lambda}\right]^{1/2}\le C_n \fint_{\tilde{B}_{r}+a}  e^{\psi/\lambda}
   \end{equation}
   \begin{equation}\label{eq:Doubling_2}
    \int_{\tilde{B}_r+a}  e^{\psi/\lambda}\le C_n \int_{\tilde{B}_{r/2}+a}  e^{\psi/\lambda}.
   \end{equation}
      \end{lemma}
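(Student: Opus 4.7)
The plan is to find $\lambda$, of size $\lesssim[1+\fint_{\tilde{B}_{3R/2}}|\psi|^\alpha]^{\gamma/\alpha}$, so that $\psi/\lambda$ has small ``non-isotropic BMO'' norm on the family of balls $\tilde{B}_r+a$; from this, both reverse H\"older and doubling will follow via a John-Nirenberg style exponential integrability bound. The main task is Step~1, a BMO-type estimate: produce a constant $M\le C_{n,\alpha,\gamma}[1+\fint_{\tilde{B}_{3R/2}}|\psi|^\alpha]^{\gamma/\alpha}$ such that $\fint_{\tilde{B}_r+a}|\psi-\psi_{\tilde{B}_r+a}|\le M$ for every $\tilde{B}_r+a\subset\tilde{B}_R$ with $r\le R/2$. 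To do this I would compare each $\tilde{B}_r+a$ with the translated polydisc $P=P(a;r,\sqrt r,\ldots,\sqrt r)$, whose volume is comparable to $|\tilde{B}_r+a|$ and which sits inside $\tilde{B}_{2R}$, so that Lemma~\ref{lm:BMO_PSH_3} reduces $\fint_P|\psi-\psi_P|$ to a sum of 1D $L^\alpha$ integrals of $\psi$ along $n$ coordinate axes through $a$. Since $|\psi|^\alpha$ is not subharmonic, these axis integrals are not controlled by the global $L^\alpha$ norm on arbitrary axes, so one must \emph{select} $n$ mutually orthogonal complex lines through $a$ on which $\psi$ is not too negative, in the spirit of Lemma~\ref{lm:BMO_PSH_5}; on each such line H\"ormander's inequality~(\ref{eq:Hormander}) reduces the axis integral to a pointwise bound $|\psi(\cdot)|^\alpha$, and Jensen controls the latter by $\fint_{\tilde{B}_{3R/2}}|\psi|^\alpha$. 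The exponent $\gamma/\alpha$ (with $\gamma>1$) and the hypothesis $\alpha>2$ absorb the multiplicative losses from the axis-selection step and a subsequent $L^2$ H\"older conversion.

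Once Step~1 is in hand, I set $\lambda=2M/c_n$ for a small universal $c_n$, so that $\psi/\lambda$ has non-isotropic BMO norm at most $c_n/2$. Since $\tilde{\mathcal B}(\tilde{B}_R)$ is doubling with constant $|\tilde{B}_r|/|\tilde{B}_{r/2}|=2^{n+1}$ and forms a space of homogeneous type, the classical Calder\'on-Zygmund stopping-time proof of the John-Nirenberg inequality applies and yields $\fint_{\tilde{B}_r+a}\exp(2|\psi-\psi_{\tilde{B}_r+a}|/\lambda)\le C_n$. Arguing as in Lemma~\ref{lm:ReverseJensen}, this gives
\[
\fint_{\tilde{B}_r+a} e^{2\psi/\lambda}\le C_n\,e^{2\psi_{\tilde{B}_r+a}/\lambda}\quad\text{and}\quad \fint_{\tilde{B}_r+a} e^{\psi/\lambda}\le C_n\,e^{\psi_{\tilde{B}_r+a}/\lambda},
\]
while Jensen supplies the lower bound $\fint_{\tilde{B}_r+a} e^{\psi/\lambda}\ge e^{\psi_{\tilde{B}_r+a}/\lambda}$; combining the first of these with the lower bound produces~(\ref{eq:ReverseHolder}). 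For the doubling inequality~(\ref{eq:Doubling_2}), combine the upper bound on $\tilde{B}_r+a$ with the lower bound on $\tilde{B}_{r/2}+a$; the ratio is bounded by $2^{n+1}C_n\,e^{(\psi_{\tilde{B}_r+a}-\psi_{\tilde{B}_{r/2}+a})/\lambda}$, and Lemma~\ref{lm:BMO_PSH_4} (with $V=\tilde{B}_r+a$, $W=\tilde{B}_{r/2}+a$) together with Step~1 controls the exponent by $O(1)$.

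The principal obstacle is Step~1. The axis-selection argument of Section~3 was formulated for \emph{Euclidean} balls centred at the \emph{origin}; adapting it to non-isotropic balls with an arbitrary centre $a\in\tilde{B}_R$, while keeping the 1D integrals controlled by the \emph{non-isotropic} global $L^\alpha$ norm on $\tilde{B}_{3R/2}$, is the technical heart of the argument, and it is precisely here that the hypothesis $\alpha>2$ and the extra flexibility $\gamma>1$ in the exponent enter essentially.
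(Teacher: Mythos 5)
Your high-level plan --- a uniform non-isotropic BMO bound for $\psi$, then a John--Nirenberg inequality for the space of homogeneous type determined by the $\tilde B_r+a$, then conclude as in Lemma~\ref{lm:ReverseJensen} --- is internally consistent, and Step~2 onwards would indeed give~(\ref{eq:ReverseHolder}) and~(\ref{eq:Doubling_2}). But you have correctly identified Step~1 as the crux, and the proposal does not close it: you gesture at adapting Lemma~\ref{lm:BMO_PSH_3} and Lemma~\ref{lm:BMO_PSH_5}, but both are calibrated to \emph{Euclidean} balls, and the axis-selection of Lemma~\ref{lm:BMO_PSH_5} produces an arbitrary unitary frame through the centre. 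A generic unitary rotation does not preserve the anisotropic shape $\tilde B_r+a$ (the $z_1$-radius $r$ and $z'$-radii $\sqrt r$ are at different scales), so there is no way to run the polydisc decomposition of Lemma~\ref{lm:BMO_PSH_3} in the rotated coordinates while keeping the right--hand side controlled by the non-isotropic $L^\alpha$ average over $\tilde B_{3R/2}$. That difficulty is precisely why the non-isotropic analogue of Theorem~\ref{th:Main} is never stated or proved in the paper, and obtaining it would be at least as hard as the lemma you are trying to prove.

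The paper's proof sidesteps all of this with a one-line device you are missing: pull back by the holomorphic (hence plurisubharmonicity-preserving) map $\zeta\mapsto(\zeta_1^2,\zeta')$. Setting $\varphi(\zeta)=\psi(\zeta_1^2,\zeta')$ turns the non-isotropic balls $\tilde B_r$ into the essentially isotropic polydiscs $B^\ast_{r'}=\{|\zeta_1|<r',\,|\zeta'|<r'\}$ with $r'=\sqrt r$, so the already-established \emph{Euclidean} Bernstein inequality of Theorem~\ref{th:Bernstein} applies directly to $\varphi$, giving $\sup_{B^\ast_{r'}}\varphi\le\sup_{B^\ast_{r'/2}}\varphi+\lambda$ with $\lambda=C_{n,\gamma}(1+|\varphi|_{B^\ast_{R'}})^\gamma$. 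Exponentiating yields $\sup_{\tilde B_r}e^{\psi/\lambda}\le e\,\sup_{\tilde B_{r/4}}e^{\psi/\lambda}$, and then both~(\ref{eq:ReverseHolder}) and~(\ref{eq:Doubling_2}) follow in two lines from the sub-mean-value inequality for the psh function $e^{\psi/\lambda}$; the hypothesis $\alpha>2$ enters only in bounding $|\varphi|_{B^\ast_{R'}}$ via H\"older with the Jacobian weight $|\zeta_1|^2$, which requires $\int|\zeta_1|^{-2\alpha'/\alpha}<\infty$, i.e.\ $\alpha'<\alpha$. Finally, note that the logical order in the paper is the reverse of yours: reverse H\"older and doubling are obtained \emph{first}, by the argument above, and only then does the proof of Theorem~\ref{th:JNExponent_2} invoke Calder\'on's theorem to extract the $A_p$ property and the exponential integrability; the John--Nirenberg inequality for the anisotropic family is an output, not an input, of this lemma.
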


      \begin{proof}
   For each $r$ and $a$ with $\tilde{B}_r+a\subset \tilde{B}_R$ we define $R_{a,r}$ to be the supremum of all $t\ge r$ such that
     $$
  \tilde{B}_{t}+a\subset \tilde{B}_{3R/2}.
  $$
  Clearly we have $c_n R\le R_{a,r}\le 3R/2$ and
  $$
  \fint_{\tilde{B}_{R_{a,r}}+a} | \psi|^\alpha \le C_n  \fint_{\tilde{B}_{3R/2}} | \psi|^\alpha.
  $$
  It suffices to verify (\ref{eq:ReverseHolder}) and (\ref{eq:Doubling_2})  with
    $$
    \lambda \le C_{n,\alpha,\gamma}\left[1+\fint_{{\tilde{B}_{R_{a,r}}+a}} | \psi|^\alpha\right]^{\gamma/\alpha}.
    $$
  For the sake of simplicity we assume $a=0$ and write $R_{a,r}$ as $R$.
   Set $\varphi(\zeta)=\psi(\zeta_1^2,\zeta')$  and
   $$
   B^\ast_r=\left\{\zeta\in {\mathbb C}^n: |\zeta_1|<r,|\zeta'|<r\right\}.
   $$
    Let $r'=\sqrt{r}$ and $\gamma>1$.   By Theorem \ref{th:Bernstein} we conclude that if $r'\le R'/\sqrt{2}$ then
   \begin{eqnarray*}
   \sup_{B^\ast_{r'}} \varphi  \le  \sup_{B(0,\sqrt{2}r')} \varphi & \le & \sup_{B^\ast_{r'/2}} \varphi +C_{n,\gamma} \left(1+|\varphi|_{B(0,R')}\right)^\gamma \left[1+\log \frac{|B(0,\sqrt{2}r')|}{|B^\ast_{r'/2}|}\right]\\
   & \le &  \sup_{B^\ast_{r'/2}} \varphi +C_{n,\gamma} \left(1+|\varphi|_{B^\ast_{R'}}\right)^\gamma\\
    & =: & \sup_{B^\ast_{r'/2}} \varphi+\lambda.
   \end{eqnarray*}
   It follows  that
   $$
    \sup_{B^\ast_{r'}} e^{\varphi/\lambda} \le e \sup_{B^\ast_{r'/2}} e^{\varphi/\lambda},
   $$
  i.e.
   $$
    \sup_{\tilde{B}_r} e^{\psi/\lambda} \le e  \sup_{\tilde{B}_{r/4}} e^{\psi/\lambda}.
   $$
   Then we have
   $$
   \left[\fint_{\tilde{B}_r} e^{2\psi/\lambda}\right]^{1/2}\le  \sup_{\tilde{B}_r} e^{\psi/\lambda}\le e \sup_{\tilde{B}_{r/4}} e^{\psi/\lambda}\le C_n   \fint_{\tilde{B}_r} e^{\psi/\lambda}
   $$
   $$
   \fint_{\tilde{B}_r} e^{2\psi/\lambda} \le  \sup_{\tilde{B}_r} e^{\psi/\lambda}\le e \sup_{\tilde{B}_{r/4}} e^{\psi/\lambda}\le C_n   \fint_{\tilde{B}_{r/2}} e^{\psi/\lambda}
   $$
   in view of the mean value inequality for the psh function $e^{\psi/\lambda}$.

   Let $\alpha'$ be the dual exponent of $\alpha$.
  As
  \begin{eqnarray*}
   \fint_{B^\ast_{R'}} | \varphi| & \le & \frac1{|B^\ast_{R'}|} \left[\int_{B^\ast_{R'}}  |\zeta_1|^{-2\alpha'/\alpha}\right]^{1/\alpha'} \left[\int_{B^\ast_{R'}} | \varphi|^{\alpha} |\zeta_1|^2\right]^{1/\alpha}\\
   & = & \frac1{|B^\ast_{R'}|}\left[\int_{B^\ast_{R'}}  |\zeta_1|^{-2\alpha'/\alpha}\right]^{1/\alpha'} \left[\frac12\int_{\tilde{B}_R} | \psi|^\alpha \right]^{1/\alpha}\\
   & \le & C_{n,\alpha} R^{-(n+1)/\alpha} \left[\int_{\tilde{B}_R} | \psi|^\alpha \right]^{1/\alpha}\\
   & \le & C_{n,\alpha} \left[\fint_{\tilde{B}_R} | \psi|^\alpha \right]^{1/\alpha},
  \end{eqnarray*}
  we have
  $$
  \lambda\le C_{n,\alpha,\gamma}\left[1+\fint_{\tilde{B}_R} | \psi|^\alpha\right]^{\gamma/\alpha}.
  $$
   \end{proof}

   \begin{proof}[Proof of Theorem \ref{th:JNExponent_2}]
   It is a standard fact that the reverse H\"older inequality like (\ref{eq:ReverseHolder}) and the doubling property like (\ref{eq:Doubling_2}) for Euclidean balls would imply the $A_p$ property for some $p>1$. We shall show that the same is true for\/ {\it nonisotropic} \/balls $\tilde{B}_r+a$ by using Calder\'on's work \cite{Calderon}. We define
   $$
   \varrho(z,w)=\max\left\{|z_1-w_1|,|z'-w'|^2\right\},\ \ \ z,w\in {\mathbb C}^n.
   $$
   It is easy to verify that $\varrho$ satisfies the following properties
   \begin{enumerate}
   \item $\varrho(z,z)=0$;
   \item $\varrho(z,w)=\varrho(w,z)>0$ if $z\neq w$;
   \item $\varrho(z,w)\le 2(\varrho(z,\zeta)+\varrho(\zeta,w))$ for all $z,w,\zeta \in {\mathbb C}^n$.
   \end{enumerate}
   Note also that $\tilde{B}_r+a=\{z:\varrho(z,a)<r\}$. Set $\omega=e^{\psi/\lambda}$ and $d\mu=\omega dV$ where $dV$ is the Lebesgue measure in ${\mathbb C}^n$. Let $|\cdot|_\mu$ be the volume associated to $d\mu$. Then we may rewrite (\ref{eq:Doubling_2}) as
   \begin{equation}\label{eq:Doubling_3}
    \left|\tilde{B}_r+a\right|_\mu  \le C_n \left|\tilde{B}_{r/2}+a\right|_\mu.
   \end{equation}
   Let $E$ be a measurable set in $\tilde{B}_r+a$. By (\ref{eq:ReverseHolder}) we have
   \begin{eqnarray*}
    |E|_\mu = \int_E \omega & \le & \left[\int_E\omega^2\right]^{1/2}|E|^{1/2}\\
    & \le & \left[\int_{\tilde{B}_r+a}\omega^2\right]^{1/2}|E|^{1/2}\\
    & \le & C_n \left|\tilde{B}_r+a\right|^{-1/2} \left[\int_{\tilde{B}_r+a}\omega\right] |E|^{1/2},
   \end{eqnarray*}
   i.e.
  $$
    \frac{|E|}{\left|\tilde{B}_r+a\right|}\ge C_n^{-1}\left[\frac{|E|_\mu}{\left|\tilde{B}_r+a\right|_\mu}\right]^2.
  $$
   According to Calder\'on (see \cite{Calderon}, the proof of Theorem 1), the above inequality implies a reverse H\"older inequality w.r.t. the measure $d\mu$ (noting that $dV=\omega^{-1}d\mu$)
   $$
   \left[ \frac1{\left|\tilde{B}_r+a\right|_\mu}\int_{\tilde{B}_r+a}\omega^{-p_n} d\mu \right]^{1/p_n} \le C_n \, \frac1{\left|\tilde{B}_r+a\right|_\mu} \int_{\tilde{B}_r+a}\omega^{-1} d\mu,
   $$
   for some $p_n> 1$, which may be rewritten as
   $$
   \left[\fint_{\tilde{B}_r+a}\omega\right]\cdot \left[\fint_{\tilde{B}_r+a}\omega^{-p_n+1}\right]^{1/(p_n-1)}\le C_n.
   $$
   It follows that
   $$
    \left[\fint_{\tilde{B}_r+a} e^{(\psi-\psi_{\tilde{B}_r+a})/\lambda}\right]\cdot \left[\fint_{\tilde{B}_r+a}e^{-(p_n-1)(\psi-\psi_{\tilde{B}_r+a})/\lambda}\right]^{1/(p_n-1)}\le C_n.
   $$
   This inequality combined with Jensen's inequality
   $$
   \fint_{\tilde{B}_r+a} e^{(\psi-\psi_{\tilde{B}_r+a})/\lambda}\ge 1 \ \ \ {\rm and\ \ \ } \fint_{\tilde{B}_r+a}e^{-(p_n-1)(\psi-\psi_{\tilde{B}_r+a})/\lambda}\ge 1
   $$
   yields
   $$
   \fint_{\tilde{B}_r+a} e^{(\psi-\psi_{\tilde{B}_r+a})/\lambda}\le C_n \ \ \ {\rm and\ \ \ } \fint_{\tilde{B}_r+a}e^{-(p_n-1)(\psi-\psi_{\tilde{B}_r+a})/\lambda}\le C_n.
   $$
   Set $u=(\psi-\psi_{\tilde{B}_r+a})/\lambda$ and $\varepsilon_n=\min\{1,p_n-1\}$. Then we have
   \begin{eqnarray*}
     \fint_{\tilde{B}_r+a} e^{\varepsilon_n |\psi-\psi_{\tilde{B}_r+a}|/\lambda} & = & \frac1{|\tilde{B}_r+a|}\int_{\{u\ge 0\}} e^{\varepsilon_n u}+\frac1{|\tilde{B}_r+a|}\int_{\{u < 0\}} e^{-\varepsilon_n u}\\
     & \le & \fint_{\tilde{B}_r+a} e^{ u}+\fint_{\tilde{B}_r+a} e^{-(p_n-1) u}\\
     & \le & C_n.
   \end{eqnarray*}
   \end{proof}

\end{document}